\newcommand{\E}{\mathbb{E}}
\newcommand{\R}{\mathbb{R}}
\newcommand{\Z}{\mathbb{Z}}
\theoremstyle{plain}
\newtheorem{theorem}{Theorem}[section]
\newtheorem{corollary}[theorem]{Corollary}
\newtheorem{lemma}[theorem]{Lemma}
\theoremstyle{definition}
\begin{document}

\title{An unified approach to the Junta theorem for discrete and continuous models.}
\author{\sc{Rapha{\"e}l Bouyrie } \\ 
\\ \textit{University of Marne--la--Vall\'ee, France}}
\date{}
\maketitle

\begin{abstract}
\textit{In a recent paper, T. Austin has proved an analogous theorem for the continuous torus of the original Junta theorem proved by Friedgut in the case of the Boolean cube. Analogous statements have been established recently in discrete cases such as the discrete Tori by Ellis et.al., and in the case of slices of the Boolean cube by Wimmer and Filmus. In the continuous case, through the notion of geometric influences, a statement has also be made by Keller, Mossel and Sen for Boltzmann probability measures. In this article, we broaden the scope of the arguments of T. Austin to get an unified proof of these results, removing the restriction to Boolean functions. Indeed, the proof of T. Austin relies on semigroup arguments and can be performed in a general framework that covers both Cayley or Schreier graphs or product of log-concave probability measures.} \\

\noindent MSC: 60C05; 05D40 \\
Keywords: \textit{Juntas, Influences, Schreier graphs, Geometric influences, Uniform enlargement.}
\end{abstract}

\section{Introduction}

Analysis of Boolean functions is an area at the intersection of theoretical computer science, functional analysis
and probability theory, which originally studies Boolean functions - i.e. functions mapping to $\{0,1\}$ - on the Boolean cube. A central concept in this field in the concept of influence. The first lightening result with respect to influences is probably the KKL theorem of Kahn-Kalai and Linial \cite{K-K-L}, which provides a non trivial lower bound on the maximal influence of any Boolean function. This theorem has numerous important applications in areas of computer science and mathematics. Since then, several results related to influences has been established for function defined on the Boolean cube.  For an complete overview over analysis of Boolean functions and its recent developments, we refer the reader to the monograph \cite{OD}. 

In this paper, we will be concern with Friedgut's Junta Theorem \cite{Fri}. It states that a Boolean function over the discrete Boolean cube with a bounded total influence essentially depends on few coordinates. The original application of Friedgut's result was related to threshold phenomenons in randoms graphs. Recently, a lot of effort has been made to extend to other discrete spaces many of the existing results on the discrete Boolean cube. We will present some extensions of the Junta theorem of Friedgut, in discrete but also continuous cases. To name a few, \cite{S-T} and \cite{Beal} generalized this work in a discrete setting respectively to Cartesian product of Graphs and discrete tori. Non-product examples has also been investigated and an analogous theorem has been proven for the slices of the cube (also called Bernoulli--Laplace model) independently by Wimmer \cite{Wim} and by Filmus \cite{Fil}. Another direction, at the root of the present paper, is the generalization of the Junta theorem to the continuous tori of Austin \cite{Aus}. 

The main ingredients in the proof of \cite{Aus} are semigroups interpolation together with hypercontractive tool combined with a reverse martingale argument. Our purpose is to generalize Austin's arguments to obtain a Junta theorem both for Cartesian product of graphs and for continuous models considered in the works \cite{CE-L}, \cite{Bou}. Typically, in the latter case, the setting is $(\R^n, \mu^{\otimes n})$ such that $(\R, \mu)$ is hypercontractive, and the appropriate notion of influence is the one introduced in \cite{K-M-S1}, that will be recalled below.

Before starting, we state some basic definitions. Let a function $f : \, \{-1, 1\}^n \to \R$ and let $\nu_p$ to be the probability measure on $\{-1,1\}^n$ defined by $(p \delta_{-1} + (1-p)\delta_{1})^{\otimes n}$, $p \in  (0,1)$. The \textit {influence} of the $i$-th coordinate on  $f$ is given by 
\[
I_i (f) = \| f(x) - f(\tau_i x) \|_{L^1 (\nu_p)},
\] 
where $\tau_i x = (x_1, \cdots, x_{i-1}, - x_i, x_{i+1}, \cdots, x_n)$. Friedgut's theorem deals with the total influence $\sum_{i=1}^n I_i(f)$ of a function $f$ denoted by $I(f)$.

A Boolean function $f : \{-1,1\}^n \mapsto \{0,1\}$ is called a $k$-junta, or simply a junta, if it depends only on $k$ coordinates, where $k$ does not depend on $n$. Notice that when $k=1$, such a function is called ``dictatorship''.

If $f$ is a junta, it is an immediate consequence that the total influence does not depend on $n$, i.e. $I(f) = O(1)$. The Junta theorem of Friedgut \cite{Fri} is a kind of converse statement of this fact. Namely, if $p$ does not depend on $n$, the following holds. 
\begin{theorem}
Set $f : \, \{-1,1\}^n \mapsto \{0,1\}$ with a bounded total influence $I(f)$. Then there exists a $e^{O(I(f)/\varepsilon)}$-junta function $g$ such that
\[
\|f- g\|_{L^1(\nu_p)} \leq \varepsilon. 
\]
\end{theorem}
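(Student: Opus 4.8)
The plan is to follow the Fourier-analytic approach that is by now standard for Friedgut's theorem, and which the paper presumably will recast in the more general semigroup language later. First I would set up the spectral decomposition: write $f = \sum_{S \subseteq [n]} \hat f(S)\, \chi_S$ in the Walsh--Fourier basis adapted to $\nu_p$ (so the $\chi_S$ are products of the normalized $\pm 1$ characters), and record Parseval, $\|f\|_{L^2(\nu_p)}^2 = \sum_S \hat f(S)^2 \le 1$ since $f$ is Boolean, together with the influence formula $I_i(f) = c_p \sum_{S \ni i} \hat f(S)^2$ up to the constant coming from the $L^1$ versus $L^2$ normalization of a single-coordinate flip (here one uses that $f-\tau_i f$ takes values in $\{-1,0,1\}$ so the $L^1$ and $L^2$ norms of this difference are comparable). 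Summing gives $\sum_S |S|\, \hat f(S)^2 \le C\, I(f)$.

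Next I would perform the two truncations that produce the junta. Choose a degree cutoff $d$ and a weight cutoff $\varepsilon'$, to be optimized. Let $J = \{\, i : I_i(f) \ge \varepsilon'\,\}$ be the set of ``influential'' coordinates; since the influences sum to $O(I(f))$, we get $|J| \le O(I(f)/\varepsilon')$. Now define $g$ by keeping only the Fourier coefficients supported on subsets $S$ with $S \subseteq J$ and $|S| \le d$, i.e. $g = \sum_{S \subseteq J,\, |S|\le d} \hat f(S)\chi_S$. The contribution of high-degree terms is controlled by Parseval and the total-influence bound: $\sum_{|S| > d}\hat f(S)^2 \le \frac{1}{d}\sum_S |S|\hat f(S)^2 \le \frac{C\, I(f)}{d}$. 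The contribution of low-degree terms that touch a non-influential coordinate is where hypercontractivity enters: by the Bonami--Beckner inequality, low-degree functions have their $L^2$ mass ``spread'', so for each $i \notin J$, $\sum_{S \ni i,\, |S| \le d}\hat f(S)^2 \le 2^{O(d)}\, I_i(f) \le 2^{O(d)}\varepsilon'$; summing over the (at most $n$, but that is fine after a more careful argument) such $i$ — more precisely, bounding $\sum_{S\subseteq[n],\,|S|\le d,\, S\not\subseteq J}\hat f(S)^2$ directly via a level-$d$ hypercontractive / FKN-type inequality — yields a bound of the form $2^{O(d)}\varepsilon' \cdot (\text{something polynomial})$, which with the right choice of $\varepsilon'$ is at most $\varepsilon^2/4$.

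Then I would assemble the estimate $\|f - g\|_{L^2(\nu_p)}^2 \le \frac{C\,I(f)}{d} + (\text{low-degree off-}J\text{ mass})$, choose $d = \Theta(I(f)/\varepsilon^2)$ to kill the first term, and choose $\varepsilon' = \varepsilon^2 \cdot 2^{-\Theta(d)}$ to kill the second; this makes $\|f-g\|_{L^2} \le \varepsilon$, hence $\|f-g\|_{L^1(\nu_p)} \le \varepsilon$. The number of coordinates on which $g$ depends is $|J| \le O(I(f)/\varepsilon') = O\!\big(I(f)\, 2^{\Theta(I(f)/\varepsilon^2)}\big) = e^{O(I(f)/\varepsilon)}$ after adjusting constants and absorbing $\varepsilon$ versus $\varepsilon^2$. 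One last routine point: $g$ as constructed is real-valued, not Boolean, and the theorem as stated does not literally demand $g$ be Boolean — but if one wants it Boolean one rounds $g$ to $\{0,1\}$ by thresholding at $1/2$, which at most doubles the $L^1$ error (a Markov / Chebyshev argument on the set where $g$ is far from $\{0,1\}$), so it suffices to run the above with $\varepsilon/2$.

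The main obstacle is the second truncation — quantitatively controlling the low-degree Fourier mass sitting on non-influential coordinates. Getting from ``each individual $i \notin J$ contributes little'' to ``all of them together contribute little'' is exactly where one needs the hypercontractive inequality in the sharp, dimension-free form, and it is what forces the exponential-in-$I(f)/\varepsilon$ junta size. In the general framework of the paper this step will have to be replaced by the appropriate hypercontractivity hypothesis for $(\R,\mu)$ or for the Cayley/Schreier graph at hand, combined with the semigroup interpolation and reverse-martingale argument of Austin in place of the explicit Fourier manipulations; but the logical skeleton — two truncations, Parseval for the degree cutoff, hypercontractivity for the influence cutoff, then optimize — is the same.
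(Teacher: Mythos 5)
Your route is the classical Fourier-analytic proof of Friedgut's theorem (a degree truncation plus an influential-coordinate truncation), which is genuinely different from what the paper does: the paper never proves this statement by Fourier truncation, but recovers it (with the weaker junta size $e^{O_p(I(f)|\log \varepsilon|/\varepsilon)}$) from a semigroup argument --- hypercontractivity applied to $L_i P_t f = P_t L_i f$, log-convexity of the $L^p$-norms, a reverse-martingale decomposition of $\|P_t f - \Pi_T P_t f\|_2^2$, and Bakry's bound $\|f - P_t f\|_2^2 \le t\,\mathcal{E}(f,f)$ --- the junta being the conditional expectation $\Pi_T f$ onto the influential coordinates rather than a Fourier truncation. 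Your approach, if completed, yields the sharp bound of the statement, which the paper's method does not; the paper's method in exchange survives the passage to real-valued functions and to general hypercontractive product spaces.

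There is, however, a genuine gap at exactly the step you flag as ``the main obstacle,'' and it cannot be left as a black box: summing $\sum_{S \ni i,\, |S|\le d}\hat f(S)^2 \le 2^{O(d)} I_i(f) \le 2^{O(d)}\varepsilon'$ over the up-to-$n$ coordinates $i \notin J$ produces a factor of $n$ and destroys the theorem. The fix is not a generic ``level-$d$ / FKN-type inequality'' but the specific Boolean-ness trick: since $D_i f := f - f\circ\tau_i$ takes values in $\{-1,0,1\}$, one has $\|D_i f\|_{1+\delta}^{1+\delta} = \|D_i f\|_1$, so hypercontractivity gives $\sum_{S\ni i}\delta^{|S|}\hat f(S)^2 \lesssim \|D_i f\|_{1+\delta}^2 = I_i(f)^{2/(1+\delta)}$ with an exponent \emph{strictly larger than} $1$; then
\[
\sum_{i\notin J}\ \sum_{S\ni i,\,|S|\le d}\hat f(S)^2\ \le\ \delta^{-d}\sum_{i\notin J} I_i(f)^{2/(1+\delta)}\ \le\ \delta^{-d}\,(\varepsilon')^{\frac{1-\delta}{1+\delta}}\,\sum_i I_i(f),
\]
which is dimension-free. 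Without this exponent boost the argument fails, and it is precisely the step whose absence for real-valued functions forces the paper's extra log-convexity/H\"older manipulation in Lemma 3.2. A second, smaller issue: with $d = \Theta(I(f)/\varepsilon^2)$ your junta has size $2^{\Theta(I(f)/\varepsilon^2)}$, not $e^{O(I(f)/\varepsilon)}$; ``absorbing $\varepsilon$ versus $\varepsilon^2$'' cannot repair an exponent. The standard fix is the rounding you mention only in passing: with $g$ the Boolean rounding of the truncation $\tilde g$, one has $\|f-g\|_1 = \P_{\nu_p}(f\ne g) \le 4\|f-\tilde g\|_2^2$, so $\|f-\tilde g\|_2^2 \le \varepsilon/4$ suffices and $d = O(I(f)/\varepsilon)$ already closes the argument.
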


In order to make an analogous statement in $\R^n$, we recall the concept of \textit{geometric influences} for a Borel probability measure $\mu$ defined by Keller, Mossel and Sen in \cite{K-M-S1} (see also \cite{K-M-S2}). For a (Borel measurable) subset $A$ of $\R^n$, the \textit{geometric influence} of the $i$-th coordinate by 
\[
I_i^{\mathcal{G}} (A) = \E_{x} [\mu^+ (A_i^x)].
\]
In the latter expression, $A_i^x \subset \R$ is the restriction of $A$ along the fiber of $x = (x_1, \ldots, x_n) \in \R^n$, that is 
\[
A_i^x = \{y \in \R, \, (x_1, \ldots, x_{i-1},y, x_{i+1}, \ldots, x_n) \in A \}
\]
and $\mu^+$ denotes the lower Minkowski content, that is for any Borel measurable set $D \subset\R$,
\[
\mu^+(D) =  \liminf\limits_{r \to 0} \frac{\mu(D+[-r,r]) - \mu(D)}{r}.
\] 
For a ($C^1$-)smooth function $f$, the geometric influences correspond to the $L^1$-norm of its partial derivatives, that is for all $i \in \{1, \cdots, n \}$, $I_i^{\mathcal{G}}(f) = \| \partial_i f \|_{L^1(\mu)}$.  Its total influence $I(f)$ is thus simply $\sum_{i=1}^n \| \partial_i f \|_{L^1(\mu)}$.

\noindent In the case of Boltzmann probability measures on $\R^n$ of the form $\frac{1}{Z_p}e^{-|x|^p} dx$, $p \geq 1$, the authors have proved an analogous result of the original Friedgut's theorem.

\vspace{2mm}

The recent work \cite{Aus} on the continuous torus  $[0,1]^n$ combines a reverse martingale argument relying on the Cartesian product structure of $[0,1]^n$ and the hypercontractive property of the heat semigroup on $[0,1]^n$. It is by now classical that such hypercontractive tool can be used in a setting that covers both discrete and continuous models, as for example in the papers \cite{CE-L}, \cite{Bou}. Therefore, in the Cartesian product setting, we are able to deduce a Junta theorem by a rather simple generalization of \cite{Aus}. Such results are not necessary new, and besides we obtain somewhat weaker constants. However, a main novelty in our results is that we remove the restriction to Boolean functions and we consider real-valued functions. The other advantage of our proof (actually Austin's argument \cite{Aus}) is its simplicity. Indeed, a function with small total influence is such that a large number of its coordinates have either no or few influences. This remark strongly suggests that the function must remain close to its average over such coordinates and therefore should be essentially determined by a small number of them. 

As a sample illustration in the continuous case, we prove the following theorem. 

\begin{theorem} \label{juntascont}
Let $(\R^n, \mu^{\otimes n})$ with $d\mu(x) = e^{-v(x)}dx$ a probability measure such that $v'' \geq c >0$ (uniformly). Let $f: \, \R^n \to \R$ in $L^2(\mu)$ with a total influence $I(f)$ independent of $n$. Then, there exist a function $g$ and a positive constant $C(\varepsilon, c)$ independent of $n$ such that $g$ depends on at most $C(\varepsilon, c)$ coordinates and
\[
\| f- g\|_{L^1(\mu^{\otimes n})}\leq \varepsilon.
\]
\end{theorem}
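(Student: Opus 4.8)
The plan is to transpose, from the torus to the diffusion semigroup of $\mu^{\otimes n}$, the semigroup–interpolation scheme of Austin. Write $(P_t)_{t\ge0}$ for the semigroup with generator $L=\Delta-\nabla v\cdot\nabla$; on the product it factorises as $P_t=P_t^{(1)}\otimes\cdots\otimes P_t^{(n)}$, and the hypothesis $v''\ge c$ is the curvature condition $CD(c,\infty)$, which supplies the only three structural inputs I will use: the pointwise gradient bound $|\partial_iP_tf|\le e^{-ct}P_t|\partial_if|$; the reverse Poincaré inequality $\|\partial_iP_t\phi\|_\infty\le\kappa(t)\,\|\phi\|_\infty$ with $\kappa(t)=O(t^{-1/2})$ as $t\to0$, together with hypercontractivity $\|P_tw\|_q\le\|w\|_p$ whenever $e^{2\rho t}\ge(q-1)/(p-1)$ for some $\rho\asymp c$; and the one-dimensional Poincaré inequality $\mathrm{Var}_\mu(w)\le c^{-1}\int(w')^2\,d\mu$. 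As a first step I would replace $f$ by a $1$-Lipschitz truncation to $[-M,M]$ (this increases no geometric influence and, for $M$ large depending on $\varepsilon$ and $f$, costs at most $\varepsilon/4$ in $L^1$), so that $\|f\|_\infty\le M$, and then set $h=P_tf$ for a small $t>0$. Expanding $P_t-\mathrm{Id}$ as a telescoping sum over the coordinates, using that each $P_t^{(j)}$ is an $L^1(\mu)$-contraction, and estimating one factor at a time by the one-dimensional bound $\|(P_t^{(i)}-\mathrm{Id})g\|_{L^1(\mu)}\le C(c)\sqrt t\,\|g'\|_{L^1(\mu)}$ — which follows from $\langle g,(P_t-\mathrm{Id})\phi\rangle=\int_0^t\langle g',(P_s\phi)'\rangle\,ds$ and the reverse Poincaré bound on $(P_s\phi)'$ — one gets $\|f-h\|_{L^1(\mu^{\otimes n})}\le C(c)\sqrt t\,I(f)$, so a fixed $t=t(\varepsilon,c,I(f))$ yields $\|f-h\|_{L^1}\le\varepsilon/4$. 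Now $h$ is smooth, $\|\partial_ih\|_\infty\le\kappa(t/2)\,M$, and $I_i^{\mathcal{G}}(h)=\|\partial_ih\|_{L^1}\le e^{-ct}I_i^{\mathcal{G}}(f)$, so $I(h)\le I(f)$.

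Next I would choose the junta set $J=\{\,i:\ I_i^{\mathcal{G}}(f)\ge\eta\,\}$, so that $|J|\le I(f)/\eta$, and take $g=\E[h\mid x_J]$, which depends only on the coordinates in $J$. Enumerating the coordinates outside $J$ and integrating them out one at a time produces a finite reverse martingale whose increment coming from coordinate $i$ has the form $u-\E_{x_i}u$ with $\partial_iu=\E[\partial_ih\mid\cdot]$; the one-dimensional Poincaré inequality together with Jensen gives $\|u-\E_{x_i}u\|_{L^2}^2\le c^{-1}\|\partial_ih\|_{L^2}^2$, and orthogonality of the martingale increments yields
\[
\|h-g\|_{L^2(\mu^{\otimes n})}^2\ \le\ \frac1c\sum_{i\notin J}\|\partial_ih\|_{L^2}^2 .
\]
Since $\mu^{\otimes n}$ is a probability measure this also bounds $\|h-g\|_{L^1}$, so it remains to make $\sum_{i\notin J}\|\partial_ih\|_{L^2}^2$ small by choosing $\eta$ small.

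For this last point I would use hypercontractivity. Writing $h=P_{t/2}(P_{t/2}f)$, the gradient bound gives $|\partial_ih|\le e^{-ct/2}P_{t/2}|\partial_iP_{t/2}f|$, and hypercontractivity $\|P_{t/2}\cdot\|_{L^2}\le\|\cdot\|_{L^q}$ with $q=1+e^{-\rho t}>1$ then gives $\|\partial_ih\|_{L^2}\le e^{-ct/2}\|\partial_iP_{t/2}f\|_{L^q}$. Interpolating this $L^q$-norm between $L^1$ and $L^\infty$, with $\theta=1-\tfrac1q<\tfrac12$, and applying the gradient bound once more to the $L^1$ factor,
\[
\|\partial_ih\|_{L^2}\ \le\ e^{-ct}\,\bigl(I_i^{\mathcal{G}}(f)\bigr)^{1-\theta}\,\bigl(\kappa(t/2)\,M\bigr)^{\theta}.
\]
Squaring and summing over $i\notin J$, where $I_i^{\mathcal{G}}(f)<\eta$ and $2(1-\theta)>1$, the exponent strictly larger than $1$ on the influence lets me factor out $\eta^{\,2(1-\theta)-1}$ and bound the remaining sum by $\sum_iI_i^{\mathcal{G}}(f)=I(f)$, so that $\sum_{i\notin J}\|\partial_ih\|_{L^2}^2\le e^{-2ct}(\kappa(t/2)M)^{2\theta}I(f)\,\eta^{\,1-2\theta}$. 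As $1-2\theta>0$ and $t,M,I(f)$ are already fixed, a small enough $\eta$ makes this at most $c\varepsilon^2/16$; combined with the two previous steps this gives $\|f-g\|_{L^1}\le\varepsilon$ with $g$ a junta on $|J|\le I(f)/\eta$ coordinates.

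The hard part is the interplay in the third step. For Boolean $f$ the truncation is vacuous because $\|\partial_if\|_\infty\le1$ automatically, so that step is immediate; in the present generality the substance is to produce, via the regularisation of the first step, an a priori bound on a norm of $\partial_if$ strictly stronger than $L^1$, and this is exactly where $v''\ge c$ enters beyond the mere spectral gap, through the reverse Poincaré inequality. The calibration to watch is that $t$ must be small enough for the $L^1$-regularisation, yet the interpolation exponent $\theta=e^{-\rho t}/(1+e^{-\rho t})$ stays strictly below $\tfrac12$ for every $t>0$, so the two constraints are in fact compatible and the power of $\eta$ never degenerates; the only visible cost is that the final junta size also depends on the truncation level, i.e.\ on $\|f\|_{L^2}$, which is harmless once $f$ is normalised, and the same scheme — with the one-dimensional heat semigroup of a finite graph in place of $P_t^{(i)}$ and edge-differences in place of $\partial_i$ — yields the discrete statements quoted in the introduction.
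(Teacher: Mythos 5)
Your proposal is correct and follows essentially the same scheme as the paper: smooth by $P_t$, control $\|f-P_tf\|_{L^1}$ by Ledoux's Buser-type inequality $\|f-P_tf\|_1\le C\sqrt t\,\sum_i\|\partial_i f\|_1$ (which you rederive by the same duality/reverse-Poincar\'e argument the paper cites), bound $\|P_tf-\Pi_T P_tf\|_2^2$ by a reverse martingale plus the one-dimensional Poincar\'e inequality, and then use hypercontractivity together with interpolation of the $L^q$-norm of $\partial_iP_{t/2}f$ and the gradient commutation to extract a positive power of the influence threshold $\eta$. The one place you deviate is the strong endpoint of the interpolation: the paper interpolates between $L^1$ and $L^2$, controls the $L^2$ factor by the reverse Poincar\'e inequality $\|\nabla P_{t/2}f\|_2^2\le\|f\|_2^2/t$ (so no truncation is needed), and applies H\"older across the sum over coordinates; you instead truncate $f$ to $[-M,M]$ and interpolate between $L^1$ and $L^\infty$ using the sup-norm reverse Poincar\'e bound $\|\nabla P_{t/2}f\|_\infty\le \kappa(t/2)M$. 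This is precisely the variant the paper itself records in the remark after Theorem~\ref{juntacont} as giving a somewhat better dependence on $\eta$ for bounded functions; your truncation step (costing $O(\|f\|_2^2/M)$ in $L^1$ and not increasing any $\|\partial_if\|_1$) is what makes that variant available for general $f\in L^2$, at the harmless price of a junta size also depending on $\|f\|_2$, exactly as in the paper's Theorem~\ref{juntacont}.
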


The paper is organized as follow. In the next section, we describe a convenient framework subsequent to this work, both in discrete and continuous setting. In Section~\ref{sec:dis}, we present our generalization of Austin's proof in the case of Cartesian product of graphs. In the next section, we make use of the recent work of Filmus \cite{Fil} over the slices of the Boolean cube to conclude similarly to a Junta theorem in this space. Finally, in the last section, we discuss the case of product of log-concave measures, proving Theorem \ref{juntascont} and its applications to geometric influences for sets. 

\section{Framework}

This section aims at presenting the framework and the main tools that will be required in the proofs. This is completely similar to the framework presented in the previous works \cite{CE-L} and \cite{Bou}. In its discrete version, this also recover the setting of \cite{O-W} of particular Schreier graphs and it is slightly more restrictive than the setting of \cite{S-T}.

\subsection{Discrete setting}

Let $\Omega$ be a finite space with probability measure $\mu$
on which there is a Markov kernel $K$, invariant
and reversible with respect to $\mu$, i.e.
such that
\[
\forall (x, y) \in \Omega^2, \quad
 \sum_{x \in \Omega} K(x,y) \mu (x) = \mu (y) \quad \mathrm{and} \quad K(x,y) \mu(x) = K(y,x) \mu(y).
\] 
Define $L$ by $L = K - Id$. The associated Dirichlet form is given by
\[
\mathcal{E}(f,g) = \int_{\Omega} f(-Lg) d\mu = \frac{1}{2}\sum_{x,y \in \Omega}
 (f(x)-f(y))(g(x)-g(y)) K(x,y) \mu(y)
\]
for functions $f,g$ on $\Omega$. Among examples of such spaces, we will discuss the ones of Cayley or Schreier graphs.

Let $G$ be a finite group acting transitively on a finite set $X$; 
we write $x^g$ for the action of $g \in G$ on $x \in X$. Assume that there exists a 
generating set $S$ for $G$ which is symmetric: $S = S^{-1}$. The associated 
Schreier graph $X = (X,G,S)$
has vertex set $X$ and an edge $(x; y)$ whenever $x^s = y$ for some $s \in S$. A Cayley graph corresponds to the special case
$X = G$. In what follows let $X$ be a Schreier or
Cayley graph endowed with uniform probability measure $\mu$. 
Given a Cayley of Schreier graph $X$, consider the transition kernel $K$ given by $K(x_1,x_2) = \frac{1}{|S|} 1_{S}(x_1x_2^{-1})$, $x_1, x_2 \in X$. 

Such kernel generates the following family of continuous time semigroups $(P_t := e^{tL})_{t \geq 0}$, 
that is with the property $P_0 = \mathrm{Id},$ and $\forall t,s \geq 0, \, P_{t+s} = P_t \circ P_s$, 
where we recall $L = K - Id$. Thus, given the definition of $K$, in a more probabilistic point of view, $P_t f (x) = f(y)$ 
where $y$ is obtained from $x$ by taking $m$ random transpositions and $m$ $\sim \mathcal{P}(t)$, the Poisson law of parameter $t$.

The associate Dirichlet forms $\mathcal{E}$ can be written as
\begin{equation} \label{DiriC}
\mathcal{E}(f,f) = \frac{1}{2|S|} \sum_{g \in G} \sum_{s \in S} [f(gs) - f(g)]^2 \mu(g) = \frac{1}{2|S|} \sum_{s \in S} \|D_{s} f\|_{L^2(G)}^2, 
\end{equation}
where $D_s f : g \mapsto f(gs) - f(g)$ in the Cayley graph case and 
\begin{equation} \label{DiriS}
\mathcal{E}(f,f) = \frac{1}{2|S|} \sum_{x \in \Omega} \sum_{s \in S} [f(x^s) - f(x)]^2 \mu(x) = \frac{1}{2|S|} \sum_{s \in S} \|D_{s} f\|_{L^2(\Omega)}^2,
\end{equation}
where $D_s f : x \mapsto f(x^s) - f(x)$  in the Schreier graph case. The condition $S = S^{-1}$ implies moreover the commutation $D_s P_t = P_t D_s$ for every $s \in S$, $t \geq 0$ (see \cite{O-W}, \cite{CE-L}, \cite{Bou}).

In this context, define the influence $I_s (f)$ of an element $s \in S$ on a real-valued function $f$ by $\| D_s f \|_1$.

Relevant examples of Cayley or Schreier graphs are given by the discrete tori $(\Z /m\Z)^n$, $m \geq 2$, with generating set $S = (e_i)_{1 \leq i \leq n}$ where $e_i = \{0, \ldots 1, \ldots 0\}$ with $1$ at the $i$-th place.  In these particular cases, the Dirichlet form takes the following explicit expression 
\[
\mathcal{E}(f,f) = \frac{1}{2n} \sum_{i=1}^n \frac{1}{m^n} \sum_{x \in (\Z/{m\Z})^n}{\left| f(x + e_i) - f(x) \right|^2} = \frac{1}{2n} \sum_{i=1}^n \|D_i f\|_{L^2((\Z/m\Z)^n)}^2.
\]
The Boolean cube (with uniform measure) can be seen as the case $m=2$.

Anther instances are given by the symmetric group $\mathfrak{S}_n$, $n \geq 2$ or the slices of the Boolean cube defined by $[n] \choose k$ $: = \{x \in \{0,1\}^n , \,  \sum_{i=1}^n x_i  =  k\}$. The symmetric group is acting on ${[n] \choose k}$ by $x^{\sigma} = (x_{\sigma(i)})_{1 \leq i \leq n}$, 
so that it has a Schreier graph structure. The generators in both case are given by the transpositions
$\tau_{ij}$, $1 \leq i < j \leq n$.

Another direction is considering as in \cite{CE-L} the operator given by $Lf = \int_{\Omega} f d\mu  - f$,
i.e $K f = \int_{\Omega} f d\mu,$ or $K = \mathrm{diag}(\mu(x))_{x \in \Omega}$. Extending the case of the Boolean cube, we can consider such product spaces with product measures 
\[
\Omega = \Omega_1 \times \cdots \times \Omega_n \quad \mathrm{with} 
 \quad \mu = \mu_1 \otimes \cdots \otimes \mu_n,
\]
when we take product of the above Markov operators. 
That is, set, for $i  =1, \ldots, n $, and $f : \Omega \to \R$
$ L_i f = \int_{\Omega_i} f d\mu_i - f $
and consider the generator on the product space given by
\[
Lf = \sum_{i=1}^n L_i f.
\]
In this case the Dirichlet form $\mathcal{E}$ may be decomposed as
\begin{equation} \label{DiriP}
\mathcal{E} (f,f) = \sum_{i=1}^n  \int_{\Omega_i} L_i(f)^2 d\mu_i. 
\end{equation} 

In the original case of the Boolean cube endowed with the measure $\nu_p$, it corresponds to $\Omega_1 = \cdots = \Omega_n = \{-1,1\}$,  $K(x,y) = \nu_p(y)$ and $L_i f = \int_{\{-1,1\}} f d \nu_p - f$. Thus, the Dirichlet form is given by
\[
\mathcal{E} (f,f) = \sum_{i=1}^n   \int_{\{-1,1\}^n} L_i(f)^2 d\nu_p = 2p(1-p) \sum_{i=1}^n   \int_{\{-1,1\}^n} D_i(f)^2 d\nu_p,
\]
whhere $D_i f \, : \, x \mapsto f(\tau_i x) - f(x)$ and $\tau_i$ defined as in the introduction. 

In the general context, we will define the influence of the $i$-th coordinate for a function $f$  by $I_i (f) = \|L_i f\|_1$, although on the discrete cube with measure $\nu_p$, it agree with the previous definition only up to a constant depending on $p$. Since we are interested in functions such that their total influences are independent of $n$, this slight abuse of notation does not change the content of our results. 

\vspace{2mm}

In the preceding context, define the spectral gap constant $\lambda$ as the largest $\lambda$ such that 
\[
 \lambda \, \mathrm{Var}_{\mu}(f) \leq \mathcal{E} (f,f), 
\]
holds for all functions $f$, where 
\[
\mathrm{Var}_{\mu} (f) = \int_{\Omega} f^2 d\mu - \bigg( \int_{\Omega} f d\mu \bigg)^2 
\]
stands for the variance of a function $f \in L^2(\Omega)$.

Similarly, the Sobolev logarithmic constant $\rho$ is the largest $\rho$ such that 
\[
 \rho \, \mathrm{Ent}_{\mu}(f^2) = \rho \leq 2 \mathcal{E} (f,f) 
\]
holds for all functions $f$, where 
\[
\mathrm{Ent}_{\mu} (f) = \int_{\Omega} f \log f d\mu 
     - \int_{\Omega} f d\mu \log \bigg(  \int_{\Omega} f d\mu \bigg)
\]
stands for the entropy of a positive function $f$. We recall (see \cite{D-SC}) that it always holds $\rho \leq \lambda$. 

A basic - but nonetheless crucial - property of these inequalities is their stability by products. Namely, if $(\Omega_1, \mu_1)$ has spectral gap constant $\lambda_1$ (respec. Sobolev logarithmic constant $\rho_1$) and $(\Omega_2, \mu_2)$ has spectral gap constant $\lambda_2$ (respec. Sobolev logarithmic constant $\rho_2$), then the Cartesian product space $(\Omega_1 \times \Omega_2, \mu_1 \otimes \mu_2)$ has spectral gap constant $\min ( \lambda_1, \lambda_2)$ (respec. Sobolev logarithmic constant $\min ( \rho_1, \rho_2)$).

It is a classical result, proven by Gross \cite{Gro} in a continuous case and adapted in the discrete cases by Diaconis and Saloff-Coste \cite{D-SC}, that a Sobolev logarithmic inequality is equivalent to hypercontractivity of the underlying semigroup $(P_t)_{t \geq 0}$. More precisely, if $\rho$ designs the Sobolev logarithmic constant, for all $f \in L^p(\mu)$ and all $t >0$, $ 1 < p < q < \infty$ with $p \geq 1 + (q-1)e^{-2\rho t}$,
\begin{equation} \label{eq.5}
\| P_t f \|_q \leq \|f\|_p.
\end{equation}
The hypercontractive tool is at the root of many results about Boolean functions. It is important to point out that in the normalization \eqref{DiriC}, for the discrete cube with uniform measure, both spectral gap and Sobolev logarithmic constants depends on $n$ and are equal to $\frac{2}{n}$, whereas in the normalization \eqref{DiriP}, both constants are equal to $1$. In its classical formulation, both constants are equal to $1$ in the case of the Boolean cube. For the discrete Tori $(\Z/m\Z)^n$ we will therefore rescale the Dirichlet form by multiplying by $\frac{n}{2}$, so that these constants agree for $m=2$. In implies that we will consider the following Dirichlet form $\mathcal{E}'(f,f) =\frac{1}{4} \sum_{i=1}^n \|D_i f\|_2^2$. With this normalization, it is known (see e.g. \cite{O-W}) that the spectral gap constant $\lambda$ attached to $\mathcal{E}'$ is equal to $ \frac{1-\cos(\frac{2\pi}{m})}{2}$  and that the Sobolev logarithmic constant $\rho$ is such that $\rho \geq \frac{c}{m^2}$ for some positive constant $c$.

In the statement of ours results in the case of Cartesian product, we will use the normalization \eqref{DiriP}. In this case simple computation shows that
\[
\mathrm{Var}_{\mu} (f) = \int_{\Omega} f (-Lf) d\mu = \mathcal{E} (f,f),
\] 
so that in this case the spectral gap constant is always equal to $1$.

\subsection{The continuous setting}

Such abstract Markov framework contains continuous examples. For a complete account, we refer the (patient) reader to the monograph \cite{B-G-L}. In this paper, we restrict ourselves to the Euclidean space $\R^n$ although it may be considered in a broader setting of Riemannian manifolds and we will recall some basic properties that will be used in Section~\ref{sec:cont}.

Let $(\R, \mu)$ the real line equipped with a probability measure. Assume that $\mu$ has a (smooth) density, so that we can write $d\mu(x) = e^{-v(x)}dx$. Then, it follows from integration by parts that the operator $L$ acting on $C^2$-smooth functions $f$ such that $Lf(x) = f''(x) - v'(x) f'(x)$ is reversible for $\mu$, that is 
\[
\forall f, g \in L^2(\mu) \cap C^2 (\R), \, \int_{\R} f Lg \, d\mu = \int_{\R} g Lf \, d\mu.
\]
Similarly, define the Dirichlet form as the positive bilinear symmetric form by 
\[
\mathcal{E}(f,g) = \int_{\R} f (-Lg) d\mu = \int_{\R} f' g' d\mu,
\]
for each functions $f,g$ in the Dirichlet domain, i.e. functions such that the above quantity is well defined. 
The spectral gap constant is the largest constant $\lambda \geq 0$ such that
\[
 \lambda \, \mathrm{Var}_{\mu}(f) \leq \mathcal{E} (f,f),
\] 
and the Sobolev logarithmic constant $\rho$ as the largest $\rho$ such that 
\[
 \rho \, \mathrm{Ent}_{\mu}(f^2) \leq 2 \mathcal{E} (f,f)
\]
(again for all functions of the Dirichlet domain). The operator $L$ generates a semigroup $(P_t)_{t \geq 0}$. For a (smooth) function $f$ of the Dirichlet domain, $P_t f$ is the unique solution of 
\[
\frac{\partial}{\partial t} P_t f = L P_t f \quad \mathrm{with} \quad P_0 f = f.
\] 
Say that $(\R, \mu)$ is hypercontractive with constant $\rho$ for all $f \in L^p(\mu)$ and all $t >0$, $ 1 < p < q < \infty$ with $p \geq 1 + (q-1)e^{-2\rho t}$, \eqref{eq.5} holds. Equivalently (by Gross' argument), the Sobolev logarithmic constant of $(\R, \mu)$ is equal to $\rho$. 

We will be concern with Cartesian product of such measures $(\R^n, \mu = \mu_1 \otimes \cdots \otimes \mu_n)$. The product generator $L$ of the $L_i$ is given by
\[
L = \sum_{i=1}^n \mathrm{Id}_{\R^{i-1}} \otimes L_i 
\otimes \mathrm{Id}_{\R^{n-i}} 
\]
with associated (product) semigroup $(P_t)_{t \geq 0}$. The Dirichlet form is decomposed into
\[
\mathcal{E}(f,f)= \sum_{i=1}^n \int_{\R^n} |\partial_i f|^2 d\mu.
\]  
The spectral gap constant is then given by $\lambda = \min_{1\leq i \leq n} \lambda_i$ and the hypercontractive constant is given by $\rho = \min_{1\leq i \leq n} \rho_i$.

One basic example is the case of strictly log-concave measures that is $(\R^n, \mu^{\otimes n})$ with $d\mu(x) = e^{-v} dx, \, v'' \geq c >0$ for $n \geq 1$ (then it is well known (see \cite{Bak}, \cite{B-G-L}) that $\lambda \geq \rho \geq c$). In particular for the Gaussian space, $\rho = \lambda =1$. In \cite{K-M-S1} the authors deal with the family of Boltzmann probability measures given by $\mu_{p}^{\otimes n}$ ($p \geq 1$) where 
\[
d\mu_{p} (x) = \frac{1}{Z_{\rho}} e^{-|x|^{p}} \,dx,
\]
and $Z_p$ is the normalizing constant. Convexity of the one dimensional potentials $x^p$ is not strict anymore (unless $p =2$ corresponding to the Gaussian space), nonetheless theses measures are hypercontractive for $p >2$. Therefore, such measures fall within our framework, and results such that Talagrand's inequality or quantitative Benjamini--Kalai--Scramm criterion has been established respectively in \cite{CE-L} and \cite{Bou}.  

To conclude this section, let us mention another important property of these semigroups.  If $v'' \geq \kappa \in \R$ (uniformly), it is well-known (see e.g. \cite{B-G-L}), that $(P_t)_{t \geq 0}$ commutes with the gradient operator $\nabla$, that is it holds, for all smooth function $f$, 
\begin{equation}  \label{commu}
|\nabla P_t f| \leq e^{\kappa t}  P_t (|\nabla f|).
\end{equation}
Since we have restrict ourselves to case of one-dimensional products, it implies by the product structure that for each $i \in \{1, \ldots, n\}$,   
\begin{equation*} 
|\partial_i P_t f| \leq e^{\kappa t}  P_t (|\partial_i f|).
\end{equation*}

\section{The case of Cartesian products graphs} \label{sec:dis}

In what follows let $G$ be a Schreier or Cayley graph with uniform probability measure $\mu$ and generating set $S$.  Let $f : \, G \to \R$. For $\{s_1, \ldots, s_k \} \subset S$, denote $T =S \backslash \{s_1, \ldots, s_k \}$, $\mathcal{C}$ the set generated by $(s_j)_{j \in T}$ and 
\begin{equation} \label{eq.aver}
\Pi_{T} f (x) = \sum_{ s \in \mathcal{C}} f(x^s) \mu(s),
\end{equation}
the function obtained from $f$ by averaging over $\mathcal{C}$. Clearly, $\Pi_T f$ depends only on $s_1, \ldots ,s_k$, i.e. $\sum_{s \in  T} I_s  (\Pi_{T} f) = 0.$ The main idea is to show that when the total influence does not depend on $n$, $f$ is close to the $k$-junta $\Pi_{T} f$, where $s_1, \ldots, s_k$ are coordinates of ``large'' influences.  

In the case of product structure $\Omega^n$, the analogous of the operator $\Pi_T$ is more explicit and consists in integrating with respect to coordinates of ``small'' influences. Namely for a subset $\{j_1, \ldots, j_k\}$ of $[n] := \{1, \ldots, n\}$ (that will correspond the coordinates of ``large influence''), denote $T = \{t_1, \ldots, t_{n-k} \}$ so that $[n]$ is the disjoint union of $\{j_1, \ldots, j_k \}$ and $\{t_1, \ldots, t_{n-k} \}$. Recall that the (product) Markov chain $K$ is defined as $K = \mathrm{diag}(\mu(x))_{x \in \Omega^n} = \mathrm{diag}(K_i)_{1 \leq i \leq n}$ with $K_i = \mathrm{diag} (\mu(x))_{x \in \Omega}$. Thus $\Pi_T$ corresponds to  
\[
\Pi_T = K_1 \circ \cdots \circ K_{n-k},
\]
that is the integration operator with respect to the coordinates of $T$.  

\subsection{The case of product structures.}

We use the normalization \eqref{DiriP}. Recall that in this context the spectral gap constant is $\lambda =1$ and that $\rho$ designs the hypercontractive constant. 

In this context, we prove the following junta theorem. 

\begin{theorem} \label{thmjuntas}
Let $(\Omega^n, \mu)$ be a Cayley or Schreier graph product with Sobolev logarithmic constant $\rho$. Let $f : \, \Omega^n \mapsto \R$ and set $I(f) = \sum_{i=1}^n I_i(f)$. Then there exists a $\mathrm{exp}(O(\frac{I(f)  |\log (\varepsilon \rho )|}{\rho \varepsilon^2}))$-junta function $g \, : \, \Omega^n \mapsto \R$  such that $\| f - g\|_{L^2(\mu)} \leq \varepsilon.$
\end{theorem}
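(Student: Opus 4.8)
The plan is to follow the semigroup scheme of \cite{Aus}: smooth $f$ by the product semigroup $(P_t)_{t\ge 0}$, observe that a smoothed function barely sees the coordinates of small influence, and take for $g$ the average of $P_tf$ over those coordinates. After the usual reductions --- subtracting the mean, and (by a truncation, which multiplies the total influence by at most a constant factor since conditional expectations and $1$-Lipschitz maps do not increase $L^1$-oscillations, together with a rescaling) assuming $\|f\|_\infty\le 1$ --- fix parameters $t>0$ and $\delta>0$ to be optimised. Put $J=\{\,i:\ I_i(f)\ge\delta\,\}$, so that $|J|\le I(f)/\delta$ by the definition of the total influence, and set $g=\Pi_{J^c}(P_tf)$, the average of $P_tf$ over the coordinates outside $J$ in the sense of \eqref{eq.aver} (equivalently $g=\bigl(\prod_{i\notin J}K_i\bigr)P_tf$); then $g$ depends only on the coordinates in $J$ and is a $|J|$-junta.

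To bound $\|f-g\|_{L^2(\mu)}$ I would split it as $\|f-P_tf\|_2+\|P_tf-\Pi_{J^c}P_tf\|_2$. The first term is controlled by the elementary semigroup estimate $\|f-P_tf\|_2^2\le t\,\mathcal E(f,f)$ (spectral decomposition of $\mathrm{Id}-P_t$ and $(1-e^{-x})^2\le x$) together with $\mathcal E(f,f)\lesssim \|f\|_\infty I(f)\lesssim I(f)$ in the normalisation \eqref{DiriP}; hence it is $\le\varepsilon/2$ as soon as $t\lesssim \varepsilon^2/I(f)$. For the second term, a reverse-martingale argument along the coordinates of $J^c$ (equivalently, orthogonality of the ANOVA decomposition) gives $\|P_tf-\Pi_{J^c}P_tf\|_2^2\le\sum_{i\notin J}\|L_iP_tf\|_2^2$, and one then invokes hypercontractivity. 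Since $L_i$ commutes with $P_\tau$ and $P_\tau$ is an $L^1$-contraction, $\|L_iP_tf\|_1\le I_i(f)<\delta$ for $i\notin J$; splitting $P_tf$ at a degree level $D$ in the chaos decomposition, the part of degree $>D$ contributes at most $\sum_{i\notin J}\|(L_iP_tf)^{>D}\|_2^2\le D e^{-2Dt}\mathcal E(f,f)$, which is negligible for $D$ slightly larger than $t^{-1}\log(1/(\delta\varepsilon))$, whereas the part of degree $\le D$ is a low-degree function, for which \eqref{eq.5} (optimised over an auxiliary time $\asymp 1/\rho$) yields a reverse hypercontractive estimate $\|h\|_2\le e^{O(D/\rho)}\|h\|_1$. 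Combining these, $\sum_{i\notin J}\|L_iP_tf\|_2^2\lesssim e^{O(D/\rho)}\sum_{i\notin J}I_i(f)^2+(\text{negligible})\le e^{O(D/\rho)}\,\delta\,I(f)+(\text{negligible})$.

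It remains to optimise. Forcing the last bound to be $\le\varepsilon^2/4$ gives $\delta\asymp \varepsilon^2 I(f)^{-1}e^{-O(D/\rho)}$, hence $|J|\le I(f)/\delta=\exp\bigl(O(D/\rho+\log(I(f)/\varepsilon))\bigr)$; taking $t\asymp\varepsilon^2/I(f)$ (the largest value allowed by the first term) and correspondingly $D\asymp \varepsilon^{-2}I(f)\,|\log(\varepsilon\rho)|$ yields $|J|=\exp\!\bigl(O(I(f)\,|\log(\varepsilon\rho)|/(\rho\varepsilon^2))\bigr)$, which is the claimed bound. I expect the main obstacle to be precisely this optimisation: the two requirements on the smoothing time $t$ --- small enough that $P_tf$ stays $L^2$-close to $f$, but large enough (equivalently, a degree cutoff small enough) that hypercontractivity turns the $L^1$-smallness of $L_iP_tf$ into $L^2$-smallness on the light coordinates --- are genuinely in tension, and it is the resulting compromise between a smoothing scale of order $\varepsilon^2/I(f)$ and a hypercontractive scale of order $1/\rho$ that is responsible for the exponential size of the junta. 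The remaining points needing care are the verification that truncation does not blow up the total influence, and the bookkeeping in the degree split so that the discarded terms are indeed negligible uniformly in $n$.
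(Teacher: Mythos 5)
Your overall architecture --- smooth with $P_t$, average over the low-influence coordinates, control $\|f-P_tf\|_2^2\le t\,\mathcal E(f,f)\le t\,I(f)$ via the spectral decomposition, and reduce $\|P_tf-\Pi_TP_tf\|_2^2$ to $\sum_{i\in T}\|L_iP_tf\|_2^2$ by the reverse-martingale/ANOVA orthogonality --- is exactly the paper's (and Austin's), and the first of the two terms is handled identically. The divergence, and the gap, is in how you convert the $L^1$-smallness of $L_iP_tf$ on the light coordinates into $L^2$-smallness. You truncate at degree $D$ and apply a level-$D$ inequality $\|h\|_2\le e^{O(D/\rho)}\|h\|_1$ to $h=(L_iP_tf)^{\le D}$, implicitly using $\|(L_iP_tf)^{\le D}\|_1\le\|L_iP_tf\|_1\le I_i(f)<\delta$. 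That step is unjustified and false as a general principle: the projection onto degree $\le D$ is an orthogonal projection in $L^2$ but is not bounded on $L^1$ uniformly in $n$. (Already on the Boolean cube, applying it to a normalized point mass $u=2^n{\bf 1}_{\{x_0\}}$ gives $\|u^{\le D}\|_2^2=\sum_{k\le D}\binom{n}{k}$ while $\|u\|_1=1$, so by the very level-$D$ inequality you invoke, $\|u^{\le D}\|_1\ge 3^{-D}\binom{n}{D}^{1/2}\to\infty$.) The correct consequence of \eqref{eq.5} is $\|u^{\le D}\|_2\le e^{O(D/\rho)}\|u\|_{p}$ for some $p>1$ bounded away from $1$ (pair $u^{\le D}$ against $u$ and apply hypercontractivity to the low-degree factor in $L^{p'}$), and to return to the influence $\|L_if\|_1$ you must then interpolate $\|L_if\|_p\le\|L_if\|_1^{\theta}\|L_if\|_2^{1-\theta}$ --- precisely the step your shortcut skips. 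The classical Friedgut proof gets away without it only because for Boolean $f$ the derivatives take values in $\{-1,0,1\}$, so all their $L^p$ norms coincide; the whole point here is to drop that assumption.

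The paper avoids degree truncation altogether: it writes $\|L_iP_tf\|_2=\|P_t(L_if)\|_2\le\|L_if\|_{1+e^{-2\rho t}}$ by hypercontractivity, then uses log-convexity $\|L_if\|_{1+e^{-2\rho t}}\le\|L_if\|_1^{\alpha(t)}\|L_if\|_2^{1-\alpha(t)}$ with $\alpha(t)=\frac{1-e^{-2\rho t}}{1+e^{-2\rho t}}$, then H\"older over $i\in T$ together with $\|L_if\|_2^2\le\|L_if\|_1\|L_if\|_\infty$ and $\|L_if\|_1\le\eta$, arriving at $\|P_tf-\Pi_TP_tf\|_2^2\le\eta^{\alpha(t)}I(f)$. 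The smallness of $\eta$ thus enters with exponent $\alpha(t)\asymp\rho t\asymp\rho\varepsilon^2/I(f)$ rather than exponent $1$, and solving $\eta^{\alpha(t)}I(f)=\varepsilon/2$ gives $\eta=\exp(-O(I(f)|\log(\varepsilon\rho)|/(\rho\varepsilon^2)))$ and the junta size $k\le I(f)/\eta$. If you want to repair your version, replace the level-$D$ step by this interpolation (no degree cutoff is needed). Two smaller points: your prescription $D\gtrsim t^{-1}\log(1/(\delta\varepsilon))$ is circular as written, since your own choice of $\delta$ makes $\log(1/\delta)=O(D/\rho)$; requiring only $D\gtrsim t^{-1}\log(I(f)/\varepsilon)$ handles the tail and removes the circularity. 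And the paper normalizes $\max_i\|L_if\|_\infty=1$ by rescaling rather than truncating $f$; only an $L^\infty$ bound on the $L_if$, not on $f$ itself, is actually used, and a truncation of $f$ would need a separate check that it does not increase the influences or move $f$ in $L^2$.
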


The proof of this theorem relies essentially of the following lemma, due to Austin \cite{Aus} in the continuous case $\Omega = [0,1]$. 

Before stating this, let $f$ : $\Omega \to \R$ and define $1 \leq i \leq n-k,$ $f_i = K_i (f_{i-1})$ with $f_0 =f$, that is integrate $f$ successively with respect to the coordinates of $T$. Then, as used in \cite{Aus}, the sequence $(f_i)_{1 \leq i \leq n -k}$ is a reverse martingale and by definition of $\Pi_T,$ $f_{n-k} = \Pi_T f$. 

Up to a scaling factor, we can assume that $\max_{1 \leq i \leq n} \| L_i f \|_{\infty} = 1$. Without loss of generality, we can assume $I(f) \geq 1$ - otherwise Theorem \ref{thmjuntas} is still true. Then, the following lemma holds (cf Lemma 2.5 in \cite{Aus}).

\begin{lemma} \label{LA}
Let $f \, : \, \Omega^n \to \R$. Denote by $I(f) = \sum_{i=1}^n \|L_i f\|_1$ to total influence of $f$. If $\eta > 0$ and $t > 0$ are fixed, then 
\[
\| P_t f - \Pi_T P_t f \|_2^2 \leq I(f) \eta^{\frac{1-e^{-2\rho t}}{1+e^{-2\rho t}}},
\]
where $T$ is such that $\forall i \in T, \, I_i f  \leq \eta$.
\end{lemma}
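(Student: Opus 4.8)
The plan is to follow Austin's strategy in three moves: (i) exploit the Cartesian product (reverse martingale) structure to replace the left-hand side by a sum of squared $L^2$-norms of $L_j P_t f$ over the ``small-influence'' coordinates $j\in T$; (ii) commute $L_j$ past $P_t$ and invoke hypercontractivity to drop from an $L^2$- to an $L^{1+e^{-2\rho t}}$-norm; (iii) interpolate against the normalization $\|L_j f\|_\infty\le 1$ and sum.

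For (i), enumerate $T=\{t_1,\dots,t_{n-k}\}$ and, applied to $h:=P_t f$, set $h_0=h$ and $h_i=K_1\circ\cdots\circ K_i(h)$, where $K_i$ denotes integration over the coordinate $t_i$. Then $h_i$ is the conditional expectation of $h$ given the coordinates outside $\{t_1,\dots,t_i\}$, the sequence $(h_i)$ is a reverse martingale with $h_{n-k}=\Pi_T h$, and orthogonality of its increments gives
\[
\|P_t f-\Pi_T P_t f\|_2^2=\sum_{i=1}^{n-k}\|h_{i-1}-h_i\|_2^2 .
\]
Since $h_{i-1}-h_i=-(K_1\circ\cdots\circ K_{i-1})\big(L_{t_i}P_t f\big)$ with $L_{t_i}=K_i-\mathrm{Id}$, and $K_1\circ\cdots\circ K_{i-1}$ is a conditional expectation (hence an $L^2$-contraction), we get $\|h_{i-1}-h_i\|_2\le\|L_{t_i}P_t f\|_2$, whence
\[
\|P_t f-\Pi_T P_t f\|_2^2\le\sum_{j\in T}\|L_j P_t f\|_2^2 .
\]

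For (ii)--(iii), note that $L_j$ acts only on the $j$-th factor while $P_t=\bigotimes_i P_t^{(i)}$, so $L_j P_t=P_t L_j$; applying \eqref{eq.5} with $q=2$ and $p=1+e^{-2\rho t}$ yields $\|L_j P_t f\|_2=\|P_t(L_j f)\|_2\le\|L_j f\|_{1+e^{-2\rho t}}$. Log-convexity of $L^p$-norms together with $\|L_j f\|_\infty\le 1$ then gives $\|L_j f\|_{1+e^{-2\rho t}}\le\|L_j f\|_1^{1/(1+e^{-2\rho t})}$, and since $\tfrac{2}{1+e^{-2\rho t}}=1+\tfrac{1-e^{-2\rho t}}{1+e^{-2\rho t}}$ with nonnegative second exponent, for $j\in T$ (where $\|L_j f\|_1=I_j f\le\eta$) we obtain $\|L_j P_t f\|_2^2\le\|L_j f\|_1\,\eta^{(1-e^{-2\rho t})/(1+e^{-2\rho t})}$. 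Summing over $j\in T$ and bounding $\sum_{j\in T}\|L_j f\|_1\le\sum_{i=1}^n\|L_i f\|_1=I(f)$ finishes the proof.

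The only genuinely structural (rather than routine) point is move (i): the reduction to $\sum_{j\in T}\|L_j P_t f\|_2^2$ rests on the orthogonality of the reverse-martingale increments, which is exactly where the product structure of $\Omega^n$ is used and where a general (non-product) Markov chain would not suffice. Everything after that is hypercontractivity plus elementary $L^p$-interpolation; one should, however, keep track of the harmless degenerate cases ($\|L_j f\|_1=0$, or $\eta=0$) so that the power $\eta^{(1-e^{-2\rho t})/(1+e^{-2\rho t})}$ is legitimate, which is fine since $t>0$ forces the exponent to be strictly positive.
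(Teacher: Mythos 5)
Your proof is correct and follows essentially the same route as the paper: reverse-martingale orthogonality to reduce to $\sum_{j\in T}\|L_jP_tf\|_2^2$, commutation plus hypercontractivity to pass to $\|L_jf\|_{1+e^{-2\rho t}}$, then interpolation using $\|L_jf\|_\infty\le1$ and $\|L_jf\|_1\le\eta$. The only (harmless) variation is that you interpolate each term directly between $L^1$ and $L^\infty$ and sum, whereas the paper interpolates between $L^1$ and $L^2$ and then applies H\"older over the sum; both yield the same bound $I(f)\,\eta^{\alpha(t)}$.
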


To establish this lemma, we mimic the arguments of \cite{Aus}. It combines the spectral gap inequality, the hypercontractive property of $(P_t)_{t \geq 0}$ and log-convexity of the $L^p$-norms.  

\begin{proof} [Proof of lemma \ref{LA}]

By the fact that $(P_t f)_{i}$ is a reverse martingale, we can write
\begin{equation} \label{eq.-3}
\|P_t f - \Pi_T P_t f \|_{L^2(\Omega^n)}^2 = \sum_{i \in T} \| (P_t f)_{i-1} - \Pi_{S \backslash\{j_i\}} (P_t f)_{i-1} \|_{L^2(\Omega^n)}^2 = \sum_{i \in T} \| (P_t f)_{i-1} - (P_t f)_i \|_{L^2(\Omega^n)}^2.  
\end{equation}
Besides, the spectral gap inequality applied to the one dimensional functions $x_i \mapsto (P_t f)_{i-1}(x)$ implies, since the  operators $\Pi_{S \backslash\{j_i\}}$ are projections, the following inequality (cf \cite{Aus})
\begin{equation*} 
\| (P_t f)_{i-1} - (P_t f)_i \|_{L^2(\Omega^n)}^2 \leq   \| L_i (P_t f) \|_2^2.
\end{equation*}
Since $P_t$ commutes with $L_i$, we can apply the hypercontractive inequality \eqref{eq.5} for each $i \in T$ : 
\begin{equation} \label{eq.0}
\| L_i (P_t f) \|_2^2 = \| P_t ( L_i f) \|_2^2 \leq \| L_i f \|_{1+e^{-2\rho t}}^2.
\end{equation}
By log-convexity of the $L^p$ norms, it follows that 
\begin{equation*} 
\| L_i f \|_{1+e^{-2\rho t}} \leq \|L_i f\|_1^{\alpha(t)} \|L_i f\|_2^{1-\alpha(t)},
\end{equation*}
where $\alpha(t) = \frac{1-e^{-2\rho t}}{1+e^{-2\rho t}} $ (notice that $\alpha(t) =  2\rho t + o(t)$ as $t$ goes to $0$).

Therefore, starting from \eqref{eq.-3} and using the previous three inequalities all together, 
\begin{equation} \label{eq.1}
\|P_t f - \Pi_T P_t f \|_{L^2(\Omega^n)}^2 = \sum_{i \in T} \| (P_t f)_{i-1} - (P_t f)_i \|_{L^2(\Omega^n)}^2 \leq \sum_{i \in T}  \|L_i f \|_1^{2\alpha(t)} \| L_i f \|_2^{2 - 2 \alpha(t)}.
\end{equation}
Applying H\"older inequality with exponents $\bigg( \frac{1}{\alpha(t)}, \frac{1}{1 - \alpha (t)} \bigg)$ yields 
\begin{equation} \label{eq.8}
 \sum_{i \in T} \|L_i f \|_1^{2\alpha(t)} \| L_i f \|_2^{2 - 2 \alpha(t)} \leq  \left( \sum_{i \in T} \| L_i f \|_1^2 \right)^{\alpha(t)} \left( \sum_{i \in T} \| L_i f \|_2^2 \right)^{1-\alpha(t)}.
\end{equation}
Using the assumption $\max_{1 \leq i \leq n} \|L_i f \|_{\infty} = 1$, it follows that
\[
 \sum_{i \in T} \| L_i f  \|_2^2  \leq   \sum_{i \in T} \| L_i f \|_1 \|L_i f \|_{\infty} \leq  \sum_{i \in T} \|  L_i f \|_1.
\] 
Besides, by definition of $T$, for all $i \in T$, $\|L_i f\|_1 = I_i(f) \leq \eta$, so that we also get
\[
\sum_{i \in T} \| L_i f  \|_1^2 \leq  \eta \bigg( \sum_{i \in T} \| L_i f  \|_1 \bigg).
\]
Thus, putting together the above two inequalities, 
\[
 \left( \sum_{i \in T} \| L_i f \|_1^2 \right)^{\alpha(t)} \left( \sum_{i \in T} \| L_i f \|_2^2 \right)^{1-\alpha(t)} 
\leq \eta^{\alpha(t)} \sum_{i \in T} \| L_i f \|_1 \leq \eta^{\alpha(t)}  I(f).
\]
Recalling \eqref{eq.1}, and \eqref{eq.8}, we get 
\begin{equation} \label{eqlemla}
 \|P_t f - \Pi_T P_t f \|_{L^2(\Omega^n)}^2 \leq I(f) \eta^{\alpha(t)}.
\end{equation}
Replacing $\alpha(t)$ by its explicit expression, it ends the proof of the lemma.

\end{proof}

To conclude to amount to Theorem \ref{thmjuntas}, we now use the following lemma, that is due to Bakry \cite{Bak} in continuous setting.  

\begin{lemma} \label{lembak} 
For every function $f$ : $\Omega^n \mapsto \R$, and every $t \geq 0$,  
\begin{equation*}
\| f - P_t f \|_{L^2 (\mu)}^2 \leq 
 t \mathcal{E}(f,f).
\end{equation*}
\end{lemma}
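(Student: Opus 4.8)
The plan is to differentiate the quantity $\|f - P_t f\|_2^2$ (or rather a closely related energy) in $t$ and control the derivative by the Dirichlet form, then integrate. Concretely, I would first recall the two elementary semigroup identities that follow from $\frac{d}{dt}P_t = L P_t$ and the reversibility of $L$: for a fixed function $f$,
\[
\frac{d}{dt}\, \mathcal{E}(P_t f, P_t f) = 2\int_\Omega (L P_t f)(L P_t f)\, d\mu = -2\,\|L P_t f\|_2^2 \le 0,
\]
so $t \mapsto \mathcal{E}(P_t f, P_t f)$ is nonincreasing, and
\[
\frac{d}{dt}\, \|f - P_t f\|_2^2 = -2\int_\Omega (f - P_t f)(L P_t f)\, d\mu = 2\,\mathcal{E}(f - P_t f, P_t f) = 2\,\mathcal{E}(f, P_t f) - 2\,\mathcal{E}(P_t f, P_t f),
\]
using symmetry of $\mathcal{E}$ in the last step.

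Next I would bound $\mathcal{E}(f, P_t f)$. Since $\mathcal{E}$ is a positive symmetric bilinear form, Cauchy--Schwarz gives $\mathcal{E}(f, P_t f) \le \mathcal{E}(f,f)^{1/2}\,\mathcal{E}(P_t f, P_t f)^{1/2} \le \mathcal{E}(f,f)$, where the second inequality uses the monotonicity of $s \mapsto \mathcal{E}(P_s f, P_s f)$ established above (so $\mathcal{E}(P_t f, P_t f) \le \mathcal{E}(f,f)$). Hence
\[
\frac{d}{dt}\, \|f - P_t f\|_2^2 \le 2\,\mathcal{E}(f, P_t f) \le 2\,\mathcal{E}(f,f).
\]
Integrating from $0$ to $t$ and using $\|f - P_0 f\|_2^2 = 0$ yields $\|f - P_t f\|_2^2 \le 2t\,\mathcal{E}(f,f)$, which is a factor $2$ off from the claimed bound. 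To recover the sharp constant $t$, I would instead keep the full derivative $2\mathcal{E}(f,P_tf) - 2\mathcal{E}(P_tf,P_tf)$ and use a spectral decomposition: writing $f = \sum_\lambda f_\lambda$ with $-Lf_\lambda = \lambda f_\lambda$ (valid in the finite/discrete case, and by the spectral theorem in the continuous case), one has $\|f - P_t f\|_2^2 = \sum_\lambda (1 - e^{-\lambda t})^2 \|f_\lambda\|_2^2$ and $\mathcal{E}(f,f) = \sum_\lambda \lambda \|f_\lambda\|_2^2$, so it suffices to check the elementary scalar inequality $(1 - e^{-\lambda t})^2 \le \lambda t$ for all $\lambda, t \ge 0$; this follows since $(1-e^{-u})^2 \le 1 - e^{-2u} \le 2u$ applied with $u = \lambda t$ gives $2\lambda t$, and a slightly more careful estimate $(1-e^{-u})^2 \le u$ (true because $e^{-u} \ge 1 - u + \tfrac{u^2}{2} - \cdots$, or just by noting $g(u) = u - (1-e^{-u})^2$ satisfies $g(0)=0$, $g'(u) = 1 - 2e^{-u}(1-e^{-u}) = 1 - 2e^{-u} + 2e^{-2u} \ge 0$) closes it.

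The main obstacle I anticipate is purely a matter of which route to constants: the slick energy-monotonicity argument loses a factor $2$, so the clean proof really wants the spectral-calculus computation, which is immediate in the finite discrete setting (where $L$ is a symmetric matrix) but in the continuous setting requires invoking the spectral theorem for the self-adjoint operator $-L$ on $L^2(\mu)$ and justifying the interchange of sum/integral and derivative — standard but worth a sentence. In all the models of this paper $L$ has a genuine spectral decomposition, so I would present the spectral argument, reducing everything to the one-variable inequality $(1-e^{-u})^2 \le u$ for $u \ge 0$, and remark that it holds in the stated generality.
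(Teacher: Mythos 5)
Your final argument --- expanding $f$ in an orthonormal eigenbasis of $-L$, writing $\|f-P_tf\|_2^2=\sum_\lambda(1-e^{-\lambda t})^2\|f_\lambda\|_2^2$ and $\mathcal{E}(f,f)=\sum_\lambda \lambda\|f_\lambda\|_2^2$, and concluding from the scalar inequality $(1-e^{-u})^2\le u$ --- is exactly the paper's proof (the paper writes $1-e^{-t\lambda_k}$ without the square, a harmless slip, since $(1-e^{-x})^2\le 1-e^{-x}\le x$ anyway). The preliminary energy-monotonicity digression losing a factor $2$ is correctly identified as suboptimal and then discarded, so the proposal is correct and essentially identical in approach.
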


\begin{proof} 
Recall that $-L$ is a (semi) positive operator on $L^2 (\Omega^n)$. Let $\{ 0, \lambda_1 = \lambda, \lambda_2, \ldots \}$ to be its spectrum and $(\varphi_k)_{k \geq 0}$ an orthonormal basis of eigenvectors with corresponding eigenvalues $\lambda_k$. Every function $f$ in $L^2(\mu)$ can be written as $\sum_{k \geq 0} f_k \varphi_k$ with $f_k = \langle f, \varphi_k \rangle_{L^2 (\mu)}$. Hence the Dirichlet form is equal to  
\[
\mathcal{E}(f,f) = \langle f, - Lf  \rangle_{L^2 (\mu)} = \sum_{k \geq 1} \lambda_k f_k^2
\]
and similarly
\[
\| f - P_t f \|_{L^2 (\mu)}^2 = \| f - e^{tL} f \|_{L^2 (\mu)}^2  = \sum_{k \geq 1} (1- e^{- t \lambda_k}) f_k^2.
\] 
The lemma follows then from the inequality $1 - e^{-x} \leq x$ valid for all $x >0$.
\end{proof}
We can now finish the proof of Theorem \ref{thmjuntas}. 

\begin{proof} [Proof of Theorem \ref{thmjuntas}]
Recall $I(f) \geq 1$. Since $\Pi_T$ is a projection, it follows 
\begin{equation} \label{eqlemproj}
\| \Pi_T f - \Pi_T P_t f \|_2^2 \leq \| f - P_t f \|_2^2.
\end{equation}
Besides $\mathcal{E}(f,f) = \sum_{i=1}^n \|L_i f \|_2^2 \leq \sum_{i=1}^n \|L_i f \|_1 \|L_i f \|_{\infty} \leq I(f)$ so that 
\begin{equation} \label{seminf}
\| f - P_t f \|_2^2 \leq \sqrt{t} I(f) 
\end{equation}
It follows, from \eqref{eqlemla}, \eqref{eqlemproj} and \eqref{seminf}, by the triangle inequality that for all $t \geq 0$ (since $\sqrt{I(f)} \leq I(f)$),
\[
\| f - \Pi_T f \|_2 \leq (2\sqrt{t} +   \eta^{\frac{\alpha}{2}}) I(f).
\]
But $\Pi_T f$ depends only on $k$ coordinates, and since for each $i \in \{1, \ldots, k \}$, $I_{s_i} f \geq \eta$, one have $k \leq I(f) \eta^{-1}.$
Choose now $t = \frac{\varepsilon^2}{16 I(f)},$ and $\eta$ such that $ \eta^{\alpha(t)} I(f) = \frac{\varepsilon}{2}$. Then $\eta =  \textrm{exp}(- O(\frac{I(f)  |\log (\varepsilon \rho )|}{\rho \varepsilon^2}))$ and $\| f - \Pi_T f \|_2 \leq \varepsilon$. Therefore, every function $f$ satisfying the assumption $I(f) = O(1)$ is a $O(\varepsilon, \textrm{exp}(O(\frac{I(f)  |\log (\varepsilon \rho )|}{\rho \varepsilon^2})))$-junta. 
\end{proof}

\textit{Remark} : Notice that, although the estimate is however weaker (due to the extra $\log \varepsilon$ term) than the ones following from the previous works, the functions can be real-valued. In the case of Boolean functions, the proof can be substantially simplified. Indeed, from \eqref{eq.0}, one case use in several concrete cases that for Boolean valued function $L_i f$ takes values in $\{-1,0,1\}$. Therefore, $\|L_i f\|_p^p$ is a constant with respect to $p$ (i.e. is equal to $I_i(f)$). Thus, \eqref{eq.0} implies that 
\[
\|L_i P_t f\|_2^2 \leq I_i(f)^{\beta(t)},
\]
where $\beta(t) = \frac{2}{1 + e^{-\rho t}} >1$. From then, one can adapt Lemma \ref{LA} to reach the desired conclusion more directly.

\medskip
To emphasis its interest, Theorem \ref{thmjuntas} contains numerous of known results. The simplest case is  $V = (\{-1,1\} , \, \nu_p )$. Then recall that he hypercontractive constant is $\rho = 2 \frac{p-(1-p)}{\log p - \log(1-p)}$ and that $\|L_i f\|_1 = p(1-p) I_i (f)$, where $I_i (f)$ is define as in the introduction. If $p$ is independent of $n$, the following result holds (the case $p=1/2$ is the original Friedgut's junta theorem for Boolean functions):

\begin{corollary}
Let $f \, : (\{0,1\}^n, \nu_p) \mapsto \{0,1\}$ with total influence $I(f)$. Then there exists $g \, : (\{0,1\}^n, \nu_p^n) \mapsto \{0,1\}$ such that $g$ is a $e^{O_p(\frac{I(f) \log (\varepsilon )}{ \varepsilon})}$-junta and $\|f-g\|_1 \leq \varepsilon$.  
\end{corollary}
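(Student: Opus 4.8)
The plan is to derive the corollary as a direct specialization of Theorem~\ref{thmjuntas} to the single-coordinate space $V = (\{-1,1\},\nu_p)$, tracking carefully how the constants simplify when $p$ is a fixed constant independent of $n$ and when $f$ takes values in $\{0,1\}$. First I would recall that the product space $(\{0,1\}^n,\nu_p^{\otimes n})$ is a Schreier (in fact Cayley) graph product with one-dimensional factor $V$, whose Sobolev logarithmic constant is $\rho = 2\frac{p-(1-p)}{\log p - \log(1-p)}$; since $p$ does not depend on $n$, this $\rho = \rho(p)$ is a positive constant absorbed into the $O_p(\cdot)$ notation. The influence used in Theorem~\ref{thmjuntas} is $\|L_i f\|_1 = p(1-p)\,I_i(f)$, so the quantity $I(f)$ appearing in that theorem differs from the combinatorial total influence $\sum_i I_i(f)$ only by the constant factor $p(1-p)$, again harmless once folded into $O_p$.

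Next I would invoke Theorem~\ref{thmjuntas} to obtain a real-valued $\exp\bigl(O_p(\frac{I(f)|\log(\varepsilon\rho)|}{\rho\varepsilon^2})\bigr)$-junta $h$ with $\|f-h\|_2 \le \varepsilon'$ for a suitably chosen $\varepsilon'$. Two refinements are then needed to match the stated form of the corollary. The first is the $\varepsilon^{-2}$ versus $\varepsilon^{-1}$ discrepancy and the absence of the $|\log(\varepsilon\rho)|$ inside the bound: here I would appeal to the Remark immediately following the proof of Theorem~\ref{thmjuntas}, which observes that for Boolean-valued $f$ one has $\|L_i f\|_p^p = I_i(f)$ for all $p$, so that \eqref{eq.0} sharpens to $\|L_i P_t f\|_2^2 \le I_i(f)^{\beta(t)}$ with $\beta(t) = \frac{2}{1+e^{-\rho t}} > 1$. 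Re-running the argument of Lemma~\ref{LA} and the proof of Theorem~\ref{thmjuntas} with this improved exponent removes one power of $\varepsilon$ and the extra logarithm, giving a junta arity of the form $e^{O_p(I(f)/\varepsilon)}$ — matching (up to the $\log\varepsilon$ that the statement explicitly retains) the claimed $e^{O_p(I(f)\log(\varepsilon)/\varepsilon)}$.

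The second refinement is passing from an $L^2$ approximation by a real-valued junta to an $L^1$ approximation by a \emph{Boolean}-valued junta. For this I would round: let $g = \mathbf{1}_{\{h \ge 1/2\}}$, which depends on the same coordinates as $h$. On the event $\{f \ne g\}$ one has $|f - h| \ge 1/2$, so $\nu_p(f\ne g) \le 4\|f-h\|_2^2$, whence $\|f-g\|_1 = \nu_p(f\ne g) \le 4\|f-h\|_2^2 \le \varepsilon$ provided we chose $\varepsilon' = \tfrac12\sqrt{\varepsilon}$ at the previous step. This only changes the constant hidden in $O_p$, so the junta arity remains $e^{O_p(I(f)\log(\varepsilon)/\varepsilon)}$, and the case $p = 1/2$ recovers Friedgut's original theorem. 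The main obstacle — really the only non-bookkeeping point — is verifying that the Boolean-specialized version of Lemma~\ref{LA} sketched in the Remark does indeed go through cleanly with $\beta(t)$ in place of $\alpha(t)$: one must check that the Hölder step \eqref{eq.8} and the use of $\|L_i f\|_\infty \le 1$ still combine to give a bound of the shape $I(f)\,\eta^{\,c\rho t}$ for small $t$, since it is this exponential-in-$1/\varepsilon$ (rather than exponential-in-$1/\varepsilon^2$) behavior that the corollary advertises.
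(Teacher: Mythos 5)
Your proposal is correct and follows essentially the same route as the paper: specialize Theorem~\ref{thmjuntas} to $(\{-1,1\},\nu_p)$, round the real-valued junta $\Pi_T f$ to a Boolean function, and rescale $\varepsilon$. One remark: the detour through the Boolean-sharpening Remark (the $\beta(t)$ exponent) is unnecessary --- the paper resolves the $\varepsilon^{-2}$ versus $\varepsilon^{-1}$ discrepancy purely by the substitution $\varepsilon^2 \mapsto \varepsilon$ in the rounding step, which is exactly the choice $\varepsilon' \asymp \sqrt{\varepsilon}$ you make at the end, since $|\log\sqrt{\varepsilon}|/(\sqrt{\varepsilon})^{2} = \tfrac12 |\log\varepsilon|/\varepsilon$ already yields the advertised $e^{O_p(I(f)|\log\varepsilon|/\varepsilon)}$ arity. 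Consequently what you flag as ``the main obstacle'' (re-running Lemma~\ref{LA} with $\beta(t)$ in place of $\alpha(t)$) can simply be dropped.
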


\begin{proof} [Proof]
Indeed, according to our previous results 
$\|f - \Pi_T f \|_2 \leq \varepsilon$. Setting $g = \frac{\mathrm{sgn}(\Pi_T f) +1}{2},$ we then have 
\[
\mathbb{P}_{\nu_p}(f \neq g) \leq \|f-\Pi_T f\|_2^2 \leq \varepsilon^2,
\]
and $g$ depends on coordinates of $S$, with $|S| = e^{O_p(\frac{I(f) |\log (\varepsilon)|}{ \varepsilon^2})}$. Substituting $\varepsilon^2$ by $\varepsilon$ yields the result.
\end{proof}

Another interesting instance is given by the discrete tori $(\Z/{m\Z})^n$ for $m \geq 2$ as in \cite{Beal}. Recall that $\lambda =  \frac{1-\cos(\frac{2\pi}{m})}{2}$ and  $\rho \geq \frac{c}{m^2}$ for some numerical constant $c$. Since it is a Cartesian product, by similar arguments, the following corollary holds (states for Boolean functions).

\begin{corollary}
Let $m \geq 2$. Let $f : (\Z/{m\Z})^n \to \{0,1\}$ with total influence $I(f)$ defined as 
$$
I(f) = \frac{1}{m^n} \sum_{i=1}^n \sum_{x \in (\Z/{m\Z})^n}{\left| f(x + e_i) - f(x) \right|}. 
$$
Then there exists a function $g$ depending on at most $ \mathrm{exp}\Big(O \Big(\frac{I(f) m^2 \log (1/\varepsilon)}{\varepsilon}\Big) \Big)$
coordinates such that 
$$
 \| f - g \|_{L^1((\Z/m\Z)^n)} =  \frac{1}{m^n} \sum_{x \in (\Z/{m\Z})^n}{\left| f(x) - g(x) \right|} \leq \varepsilon.
$$
\end{corollary}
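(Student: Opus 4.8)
The plan is to derive this corollary from Theorem~\ref{thmjuntas} in exactly the same way the preceding corollary for $\nu_p$ was derived, the only new ingredient being the explicit spectral data of the cyclic factor $\Z/m\Z$. First I would record that $(\Z/m\Z)^n$ equipped with the shift generators $\{\pm e_i\}_{1\le i\le n}$ is a Cartesian product of $n$ copies of the Cayley graph on $\Z/m\Z$, hence is covered by the framework of Section~\ref{sec:dis}. With the normalization $\mathcal{E}'$ (chosen precisely so that the case $m=2$ reduces to the classical Boolean cube), the framework section recalls that the Sobolev logarithmic constant of the product satisfies $\rho\ge c/m^2$ for an absolute constant $c>0$, while the per-coordinate shift influence $I_{e_i}(f)=\|D_i f\|_1=\frac1{m^n}\sum_x|f(x+e_i)-f(x)|$ is exactly the quantity summed in the statement, so that $I(f)=\sum_{i=1}^n I_{e_i}(f)$. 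Applying Theorem~\ref{thmjuntas} with an auxiliary $L^2$-accuracy $\varepsilon'$ (to be fixed at the end) then produces the junta $h:=\Pi_T f$, depending on at most $\exp\!\big(O\big(I(f)\,|\log(\varepsilon'\rho)|/(\rho\,\varepsilon'^2)\big)\big)$ coordinates, with $\|f-h\|_{L^2((\Z/m\Z)^n)}\le\varepsilon'$.

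Since $f$ takes values in $\{0,1\}$, I would then convert this $L^2$ bound into the required $L^1$ bound for a Boolean junta by rounding $h$ on the same coordinate set: set $g=1_{\{h\ge 1/2\}}$. On the event $\{f\ne g\}$ one necessarily has $|f-h|\ge 1/2$, so Markov's inequality gives $\|f-g\|_{L^1}=\P(f\ne g)\le 4\,\|f-h\|_{L^2}^2\le 4\varepsilon'^2$. Choosing $\varepsilon'=\tfrac12\sqrt{\varepsilon}$ makes this at most $\varepsilon$; substituting $\rho\asymp m^{-2}$ and $\varepsilon'^2\asymp\varepsilon$ into the junta size gives $1/(\rho\,\varepsilon'^2)=O(m^2/\varepsilon)$ and $|\log(\varepsilon'\rho)|=O(\log(1/\varepsilon))$ (treating $m$ as a constant; keeping it explicit, $O(\log(m^2/\varepsilon))$, which is absorbed into the same $O(\cdot)$). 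Hence $h$, and therefore $g$, depends on at most $\exp\!\big(O\big(I(f)\,m^2\log(1/\varepsilon)/\varepsilon\big)\big)$ coordinates, which is the claimed bound.

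I do not expect a genuine obstacle here: the argument is a specialization of Theorem~\ref{thmjuntas} followed by the standard Boolean rounding step, mirroring the previous corollary. The two points that require a little care are, first, making sure that the normalization of the Dirichlet form underlying Theorem~\ref{thmjuntas} is the one against which the quoted value $\rho\ge c/m^2$ (and the matching $m=2$ Boolean normalization) is stated, so that no hidden $n$- or $m$-dependence slips into $\rho$; and second, the $L^2\to L^1$ rounding, which costs a factor $4$ and forces the $\sqrt{\varepsilon}$ rescaling responsible for the exponent scaling like $1/\varepsilon$ rather than $1/\varepsilon^2$.
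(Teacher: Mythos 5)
Your proposal matches the paper's (essentially unwritten) proof: the paper derives this corollary exactly as you do, by invoking Theorem~\ref{thmjuntas} for the Cartesian product $(\Z/m\Z)^n$ with the rescaled Dirichlet form for which $\rho \geq c/m^2$, rounding the junta $\Pi_T f$ to a Boolean function $g$, bounding $\P(f \neq g)$ by a constant times $\|f - \Pi_T f\|_2^2$, and substituting $\varepsilon^2$ by $\varepsilon$, just as in the preceding corollary for $\nu_p$. The two caveats you flag yourself (the normalization under which $\rho \geq c/m^2$ is quoted, and the fact that the shift influences $\|D_i f\|_1$ of the statement and the influences $\|L_i f\|_1$ of Theorem~\ref{thmjuntas} agree only up to an $m$-dependent constant, which could in principle inflate the $m^2$ in the exponent) are glossed over by the paper as well, which concedes that its $m$- and $\varepsilon$-dependence is weaker than the sharp result it cites.
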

This is a weak form of Theorem $5$ of \cite{Beal} (both in the dependance on $\varepsilon$ and $m$), but with a somewhat simpler proof.

More generally, Theorem \ref{thmjuntas} is a particular case of the recent work of Sachdeva and Tulsiani \cite{S-T}. Namely, if $(G,V)$ is a graph, and $V^n$ is its $n-$th power, the main result of \cite{S-T} ensures that  
any boolean function with total influence $I(f)$ on $V^n$ is a $O(\varepsilon, \textrm{exp}(O(\frac{I(f)}{\rho) \varepsilon}))$-junta (that is, Theorem \ref{thmjuntas} is a particular case of this result up to the logarithmic factor on $\varepsilon$).  

The proof of \cite{S-T} relies on an appropriate control on the entropy in the spirit of a work of Rossignol \cite{Ros}. It is mentioned that the results can be extend for more general Markov chains $K$ that the one attached with the standard random walk to the nearest neighbour. 

\vspace{2mm}

We notice that the proof of lemma \ref{LA} heavily relies on the Cartesian product structure. Indeed, in the non product setting, the reverse martingale argument fails. For a general graph, the lack of structure impends to bound efficiently $ \| f - \Pi_T f \|_{L^1(G)}$. In the next section, we make use of a construction by Filmus \cite{Fil} to obtain a similar conclusion over the slices of the Boolean cube with this scheme of proof.

\section{The case of the slice of the Boolean cube.}

The simplest - and the most popular - case  of a non-product Schreier graph is probably the slices of the Boolean cube $[n] \choose k$, for which a Junta theorem  has been established in recent papers by Wimmer \cite{Wim} and Filmus \cite{Fil}. In the last few years, other results of harmonic analysis have been extended over $[n] \choose k$, and also over the symmetric group  $\mathfrak{S}_n$, such as the KKL theorem \cite{O-W}, Talagrand's inequality \cite{CE-L} or the quantitative Benjamini--Kalai--Schramm relationship between noise stability and influences \cite{Bou}. All of these above results rely on the hypercontractivity of the underlying semi-group. In the case of the symmetric group, these results are not improving upon the spectral gap inequality. The reason for it is that the hypercontractive constant, of order $1/(n \log n)$, is too small with respect to the spectral gap equal to $1/n$. This is however not the case for the slices of the Boolean cube. Indeed, the spectral gap constant is equal to $1/n$ and the hypercontractive constant $\rho$ has been computed by Lee and Yau \cite{L-Y} and is of order $(n \log \omega(k,n) )^{-1}$ with $\omega(k,n) = \frac{n^2}{k(n-k)}$. Therefore, if $k$ is of order $n$, both spectral gap and hypercontractive constants are of the same order, leading to an improvement over the spectral gap inequality.  

\medskip

Recall that the symmetric group is acting on $[n] \choose k$ by $x^{\sigma} = (x_{\sigma(i)})_{1 \leq i \leq n}$. Denote 
\[
D_{\tau_{ij}} f  \, : \,  x \, \mapsto f(x^{\tau_{ij}}) -f(x), 
\]
so that $x^{\tau_{ij}}$ is obtained from $x$ by switching the coordinates $i$ and $j$.

In this context the total influence is defined by 
\[
\mathrm{Inf} (f) = \frac{1}{n} \sum_{1 \leq i < j \leq n} I_{\tau_{ij}} (f)
\]
and similarly, the total influence up to $k$ coordinates is $\mathrm{Inf} ^{(k)}(f) = \frac{1}{k} \sum_{1 \leq i < j \leq k} I_{\tau_{ij}}(f).$ Notice that, if $\max_{\tau_{ij} \in \mathcal{T}_n} \| D_{\tau_{ij}} f\|_{\infty} =1$ then $\|D_{\tau_{ij}} f \|_2^2 \leq I_{\tau_{ij}} (f)$ (and equality holds for Boolean functions). Therefore the Dirichlet form is then related to the influences by 
\[
\mathcal{E}(f,f) = \frac{1}{n(n-1)} \sum_{1 \leq i <j \leq n} \|D_{\tau_{ij}} f \|_2^2 \leq \frac{1}{n(n-1)} \sum_{1 \leq i < j \leq n} I_{\tau_{ij}} (f) = \frac{1}{n-1} \mathrm{Inf} (f).
\] 

\medskip

Wimmer's original proof \cite{Wim} of the Junta theorem is done on the symmetric group and uses the properties of Young's orthogonal representation. It is pointed out that the Junta theorem is false for the symmetric group, but the author is able to deduce it by reduction for Boolean valued function in the case of slice of the hypercube $[n] \choose k$, when $k$ and $n$ are of same order.

Recently, Filmus \cite{Fil} gave another combinatorial proof of the Friedgut--Wimmer theorem, by constructing a Fourier basis of the slices of the Boolean cube. The purpose of the following is to recall the main properties of this basis and to show that Fourier structure allows for an efficient bound on the preceding quantity $\| P_t f - \Pi_T P_t f\|_2$ in terms of the total influence of $f$. This can be viewed as an adaptation of Lemma \ref{LA}, and thus one can conclude similarly as in Section~\ref{sec:dis}. Notice that as in the preceding Section, the main novelty with respect to \cite{Fil} and \cite{Wim} is that we can consider real-valued functions. 

\medskip

Let $f$ : ${[n] \choose k} \to \R$ such that $1 \leq \mathrm{Inf}(f) = O(1)$. Up to a scaling factor, we will assume furthermore (as in the preceding Section) that $\max_{\tau_{ij} \in \mathcal{T}_n} \| D_{\tau_{ij}} f\|_{\infty} =1$, so that $\| D_{\tau_{ij}} f\|_2^2 \leq I_{\tau_{ij}} (f)$. 

Without loss of generality (that is, up to a composition with an appropriate permutation), we can choose a threshold $\eta >0$ such that $I_{\tau_{ij}} f \leq \eta$ whenever $i,j \notin S$ where $S = \{n-m+1, \ldots, n\}$. In the above notations the set $\mathcal{C}$ is therefore generated by the transpositions $(\tau_{ij})_{1, i < j \leq n-m}$, that is all the permutations of $[n-m]$ identify as $\mathfrak{S}_{n-m}$ (seen as a subset of $\mathfrak{S}_n$). The operator given by \eqref{eq.aver} is thus
\[
\Pi_T f(x) = \frac{1}{(n-m)!} \sum_{\sigma \in \mathfrak{S}_{n-m}} f(x^{\sigma})
\]
and depends only on the last $m$ coordinates. Here we say that $f$ depends on a coordinate of a subset $S$ of $[n]$ if $I_{\tau_{ij}} (f) = 0$ whenever $i,j \notin S$. 

The Fourier orthogonal basis build in \cite{Fil} consists of multilinear polynomials $(\chi_B)_{B \in \mathcal{B}_n}$ where $\mathcal{B}_n$ are subsets of $[n]$ called ``top sets''. Each function $f$ defined on the slices can then be decomposed into 
\[
f = \sum_{B \in \mathcal{B}_n} \hat{f}(B) \chi_B,
\]
where as usual $\hat{f}(B) = \frac{\E_{\mu} f \chi_B } {\| \chi_B \|_2^2}= \frac{\langle f, \chi_B \rangle_2} {\| \chi_B \|_2^2}$. Moreover, we have the property that 
\[
f - \Pi_T f =  \sum_{B \in \mathcal{B}_n \, B \cap [n-m] \neq \emptyset} \hat{f}(B)  \chi_B,
\]
and thus 
\[
 \|f - \Pi_T f \|_2^2 =  \sum_{B \in \mathcal{B}_n, \, B \cap [n-m] \neq \emptyset} \hat{f}(B)^2 \| \chi_B \|_2^2
\] 
(see \cite{Fil}, Lemma $3.3$ and $3.4$). A key property of this basis is that as in the case of the Boolean cube, each $\chi_B$ is an eigenvector of the operator $L$ and so of the semigroup $(P_t)_{t\geq 0}$. More precisely, we have the following lemma (Lemma $4.5$ of \cite{Fil}).  
\begin{lemma} \label{lemfil1}
For every $B \in \mathcal{B}_n$, 
\[
L \chi_B = \frac{2 |B| (n+1 - |B|)}{n(n-1)} \chi_B.
\]
\end{lemma}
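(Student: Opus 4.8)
The plan is to compute the action of $L$ on a single basis element $\chi_B$ by reducing everything to the combinatorial action of transpositions, using the decomposition of the Dirichlet form in terms of the generators $\tau_{ij}$. Recall from the framework that on the slice $\binom{[n]}{k}$ the generator is (a normalization of) $L f = \frac{1}{\binom{n}{2}}\sum_{1\le i<j\le n} D_{\tau_{ij}} f$, where $D_{\tau_{ij}} f(x) = f(x^{\tau_{ij}}) - f(x)$; so the whole statement is equivalent to showing $\sum_{1\le i<j\le n} D_{\tau_{ij}}\chi_B = 2|B|(n+1-|B|)\,\chi_B$ after matching the normalizing constant $\frac{1}{n(n-1)}$ against $\frac{1}{2}\binom{n}{2}^{-1}$. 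First I would recall from \cite{Fil} the explicit description of $\chi_B$ as a multilinear polynomial built from the ``top set'' $B$ (it is, up to normalization, an antisymmetrization over the structure attached to $B$), together with the two facts already quoted in the excerpt: that the $\chi_B$ are orthogonal, and that $f - \Pi_T f$ is exactly the sum of the $\chi_B$ with $B \cap [n-m] \neq \emptyset$. The key structural input is that the span of $\{\chi_B : |B| = \ell\}$ is precisely the degree-$\ell$ part of the natural filtration on functions on the slice, and that $L$ preserves this filtration.

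The main steps, in order, would be: (1) Show $L$ maps $\mathrm{span}\{\chi_B : |B| \le \ell\}$ into itself, which follows because each $D_{\tau_{ij}}$ does not raise degree — a transposition applied to a multilinear polynomial of degree $\le \ell$ stays of degree $\le \ell$. (2) Show that in fact $L\chi_B$ is a scalar multiple of $\chi_B$: since $L$ is self-adjoint with respect to $\mu$ and preserves the filtration, it acts on each graded piece; one then argues that within the degree-$\ell$ piece, $L$ commutes with the action of $\mathfrak{S}_n$ (because the generating set of transpositions is a union of conjugacy-stable sets, so $L$ is $\mathfrak{S}_n$-equivariant), and the $\chi_B$ with $|B| = \ell$ form a basis adapted to this representation-theoretic decomposition in such a way that each $\chi_B$ is forced to be an eigenvector. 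Alternatively, and more in the combinatorial spirit of \cite{Fil}, one computes $\sum_{ij} D_{\tau_{ij}}\chi_B$ directly on the explicit polynomial and collects terms. (3) Identify the eigenvalue: by Step (2) the eigenvalue depends only on $\ell = |B|$, so it suffices to evaluate it on one convenient $\chi_B$ (e.g. the one with $B = \{1, \ldots, \ell\}$, or by testing against a coordinate monomial), count how many of the $\binom{n}{2}$ transpositions move the polynomial and by how much, and simplify to $\frac{2\ell(n+1-\ell)}{n(n-1)}$.

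The main obstacle is Step (3), pinning down the exact constant $2|B|(n+1-|B|)$. The eigenvalue is quadratic in $\ell$ and symmetric under $\ell \mapsto n+1-\ell$, which is a useful sanity check (it reflects the symmetry $\binom{[n]}{k} \cong \binom{[n]}{n-k}$ and the structure of the associated Johnson-scheme / Gelfand pair), but extracting it cleanly requires either a careful bookkeeping of the antisymmetrization defining $\chi_B$ under each transposition, or invoking the known spectrum of the Johnson graph / Laplacian on the slice (the eigenvalues of the Laplacian $\mathrm{Id} - K$ on $\binom{[n]}{k}$ are $\frac{j(n+1-j)}{k(n-k)}$ up to normalization for $j = 0, \ldots, \min(k, n-k)$) and matching it to the degree filtration. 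I would take the latter route: cite the classical computation of the spectrum of the Johnson association scheme, observe that the $j$-th eigenspace coincides with the degree-$j$ graded piece $\mathrm{span}\{\chi_B : |B| = j\}$ by the filtration property from \cite{Fil}, and then simply rescale by the constant $\frac{1}{n(n-1)}$ coming from our chosen normalization of $L$ on the Schreier graph to land on $\frac{2|B|(n+1-|B|)}{n(n-1)}$. The only remaining care is checking the numerical factor of $2$ and the denominator against the conventions fixed earlier in the excerpt, which is a routine but necessary verification.
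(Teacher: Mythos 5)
The paper does not actually prove this statement: it is imported verbatim as Lemma~4.5 of \cite{Fil}, so there is no in-paper argument to compare yours against. Judged on its own merits, your plan is a legitimate and essentially correct alternative route. Filmus's proof is a direct computation on his explicit multilinear basis; you instead use the Gelfand-pair structure of $\mathfrak{S}_{n-k}\times\mathfrak{S}_k \le \mathfrak{S}_n$: the transposition walk is $\mathfrak{S}_n$-equivariant (the generating set is a full conjugacy class), $L^2\binom{[n]}{k}$ decomposes multiplicity-freely into irreducibles $V_j \cong S^{(n-j,j)}$, Schur's lemma forces $L$ to act as a scalar on each $V_j$, and the identification $\mathrm{span}\{\chi_B : |B|=j\} = V_j$ follows from Filmus's filtration-plus-orthogonality properties (the degree-$\le d$ polynomials on the slice are exactly $\bigoplus_{j\le d}V_j$). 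This buys you the eigenvector property for free, without touching the explicit form of $\chi_B$; the price is that you must import the spectrum of the Johnson scheme and verify the matching of graded pieces with isotypic components, which your sketch correctly flags but does not carry out.

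Two concrete points need care before this compiles into a proof. First, the sign: with $L = K - \mathrm{Id}$ as defined in Section~2, $L$ is negative semidefinite, and the transposition walk gives $L = \binom{n}{2}^{-1}(A - k(n-k)\,\mathrm{Id})$ with $A$ the Johnson adjacency operator; since $A$ has eigenvalue $(k-j)(n-k-j)-j$ on $V_j$, one gets $L\chi_B = -\frac{2|B|(n+1-|B|)}{n(n-1)}\chi_B$. The positive sign in the lemma (consistent with the decay $e^{-t\,|B|(n+1-|B|)/n}$ in the subsequent formula for $H_t$) means the intended operator is $\mathrm{Id}-K$; your reduction to $\sum_{i<j}D_{\tau_{ij}}\chi_B$ will produce the opposite sign unless you track this. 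Second, the constant: with $Lf = \binom{n}{2}^{-1}\sum_{i<j}D_{\tau_{ij}}f = \frac{2}{n(n-1)}\sum_{i<j}D_{\tau_{ij}}f$, the target identity is $\sum_{i<j}D_{\tau_{ij}}\chi_B = -|B|(n+1-|B|)\chi_B$, not $2|B|(n+1-|B|)\chi_B$ as written in your reduction; the factor of $2$ in the lemma is already absorbed by $\binom{n}{2}^{-1}$. These are exactly the bookkeeping items you deferred, and they do work out, but as it stands the proposal is a plan whose decisive step (the eigenvalue) rests on an external citation rather than a computation.
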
 

Define $H_t = P_{\frac{(n-1)t}{2}}$. Then, Lemma \ref{lemfil1}  implies that for every $f \, : \, {[n] \choose k} \to \{0,1\}$, it holds
\[
H_t f = \sum_{B \in \mathcal{B}_n} \mathrm{exp} \bigg( - t \frac{ |B| (n+1 - |B|)}{n}\bigg) \hat{f}(B) \chi_B,
\]
and 
\begin{equation} \label{eq.9}
\|H_t f - \Pi_T H_t f \|_2^2
= \sum_{B \in \mathcal{B}_n, \, B \cap [n-m] \neq \emptyset} \mathrm{exp} \bigg( - t \frac{|B| (n+1 - |B|)}{n}\bigg) \hat{f}(B)^2 \|\chi_B\|_2^2.
\end{equation}

Another important result of \cite{Fil} is to express the total influences in terms of the orthogonal basis $\chi_B$, similarly as in the case of the Boolean cube. More precisely, the following lemma holds (still Lemma $4.5$ of \cite{Fil}).  
\begin{lemma} \label{lemfil2}
\[
\forall \, 1 \leq k \leq n, \, \mathrm{Inf} ^{(k)}(f) =  \sum_{B \in \mathcal{B}_n} \frac{|B \cap [k]| (k+1 - |B \cap [k] |)}{k} \hat{f}(B)^2 \|\chi_B\|_2^2.
\]
In particular, 
\[
\mathrm{Inf} (f) = \sum_{B \in \mathcal{B}_n} \frac{|B | (n+1 - |B|)}{n} \hat{f}(B)^2 \|\chi_B\|_2^2.
\]
\end{lemma}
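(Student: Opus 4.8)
The plan is to reduce the stated identity to an eigenvalue computation for a localized averaging operator, and then to feed in the fact — borrowed from \cite{Fil} — that Filmus's basis is adapted to the chain of Young subgroups $\mathfrak{S}_{[1]}\subset\cdots\subset\mathfrak{S}_{[n]}$. Throughout I read $I_{\tau_{ij}}(f)$ as $\|D_{\tau_{ij}}f\|_2^2$, which coincides with $\|D_{\tau_{ij}}f\|_1=I_{\tau_{ij}}(f)$ for Boolean $f$ and is the quantity that actually enters the junta argument of Section~\ref{sec:dis}. The first step is purely formal: each transposition $\tau_{ij}$ acts on ${[n] \choose k}$ as a $\mu$-preserving involution, so $\|D_{\tau_{ij}}f\|_2^2=\|f\circ\tau_{ij}-f\|_2^2=2\langle f,(\mathrm{Id}-\tau_{ij})f\rangle$; summing over $1\le i<j\le k$ and dividing by $k$ yields
\[
\mathrm{Inf}^{(k)}(f)=(k-1)\,\big\langle f,(\mathrm{Id}-A_{[k]})f\big\rangle ,
\qquad A_{[k]}:=\binom{k}{2}^{-1}\!\!\!\sum_{1\le i<j\le k}\!\!\tau_{ij},
\]
where $A_{[k]}$ is the averaging operator of the random walk applying a uniform transposition inside the first $k$ coordinates, and (up to a positive normalization constant) $\mathrm{Id}-A_{[n]}$ is the operator appearing in Lemma~\ref{lemfil1}.

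Next, since $\{\chi_B\}_{B\in\mathcal{B}_n}$ is orthogonal in $L^2(\mu)$, the identity will follow once we know that each $\chi_B$ is an eigenvector of $A_{[k]}$ with
\[
(\mathrm{Id}-A_{[k]})\,\chi_B=c_k\!\left(|B\cap[k]|\right)\chi_B ,
\qquad c_k(d):=\frac{d\,(k+1-d)}{k(k-1)} ,
\]
i.e. with an eigenvalue depending on $B$ only through $|B\cap[k]|$. Indeed, substituting into the previous display and using orthogonality gives
\[
\mathrm{Inf}^{(k)}(f)=(k-1)\!\!\sum_{B\in\mathcal{B}_n}\!\!c_k\!\left(|B\cap[k]|\right)\hat f(B)^2\|\chi_B\|_2^2=\!\!\sum_{B\in\mathcal{B}_n}\!\!\frac{|B\cap[k]|\,(k+1-|B\cap[k]|)}{k}\,\hat f(B)^2\|\chi_B\|_2^2 ,
\]
which is the claim; the ``in particular'' statement is then the case $k=n$. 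To pin down the eigenvalue $c_k(d)$, I would restrict attention to a single $\mathfrak{S}_{[k]}$-orbit $\{x:\ x|_{[n]\setminus[k]}\ \text{fixed}\}$: such an orbit is a copy of a slice of $[k]$ on which $A_{[k]}$ acts as the full averaging operator, so the eigenvalue is read off from Lemma~\ref{lemfil1} with $n$ replaced by $k$ and $|B|$ by $|B\cap[k]|$.

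The hard part, and the step I must import from \cite{Fil}, is that a \emph{single} basis $\{\chi_B\}$ simultaneously diagonalizes all of $A_{[1]},A_{[2]},\dots,A_{[n]}$, with the $A_{[k]}$-eigenvalue on $\chi_B$ governed exactly by $|B\cap[k]|$. This is a Gelfand--Tsetlin-type phenomenon: $\{\chi_B\}$ is adapted to the flag $\mathfrak{S}_{[1]}\subset\cdots\subset\mathfrak{S}_{[n]}$, equivalently it diagonalizes the associated Jucys--Murphy-type elements, of which $\mathrm{Id}-A_{[k]}$ is a symmetric combination. It is \emph{not} a formal consequence of self-adjointness of $A_{[k]}$ together with its $\mathfrak{S}_{[k]}\times\mathfrak{S}_{[n]\setminus[k]}$-invariance — those only describe the eigenspaces and leave open whether the $\chi_B$ sit inside them with the correct multiplicities. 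Rather, one uses the explicit Gram--Schmidt construction of the basis in \cite{Fil}, in which the ``$[k]$-part'' of each monomial is orthogonalized before the remaining coordinates; this ordering forces $A_{[k]}$ to be triangular in the $\chi$-basis with respect to $|B\cap[k]|$, self-adjointness upgrades triangular to diagonal, and the orbit computation above then fixes the eigenvalue. Granting this input, everything else is elementary algebra, and specializing to $k=n$ recovers the formula for $\mathrm{Inf}(f)$.
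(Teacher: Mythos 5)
The paper offers no proof of this lemma: it is quoted verbatim as Lemma~4.5 of \cite{Fil}, so the only comparison available is between your argument and Filmus's. Your skeleton is the right one and is essentially his: reading $I_{\tau_{ij}}(f)$ as $\|D_{\tau_{ij}}f\|_2^2$ (which is indeed forced if the identity is to hold for real-valued $f$), the reduction $\|D_{\tau_{ij}}f\|_2^2=2\langle f,(\mathrm{Id}-\tau_{ij})f\rangle$ and hence $\mathrm{Inf}^{(k)}(f)=(k-1)\langle f,(\mathrm{Id}-A_{[k]})f\rangle$ is correct, the operators $A_{[2]},\dots,A_{[n]}$ do commute (their successive differences are, up to constants, Jucys--Murphy elements), and the restriction-to-an-orbit argument correctly identifies the possible eigenvalues of $\mathrm{Id}-A_{[k]}$. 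You are also candid that the entire content of the lemma sits in the assertion that Filmus's particular basis simultaneously diagonalizes all the $A_{[k]}$ with eigenvalue governed by $|B\cap[k]|$; you import this rather than prove it. As a standalone proof that is the gap --- it is exactly the combinatorial work of \cite{Fil} --- but since the paper cites \cite{Fil} for the whole statement, you are not below the paper's own standard here.

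There is, however, a concrete factor-of-$2$ problem you must address. Your eigenvalue $c_k(d)=\frac{d(k+1-d)}{k(k-1)}$ is \emph{not} what ``reading off Lemma~\ref{lemfil1} with $n$ replaced by $k$'' gives: that lemma asserts the eigenvalue $\frac{2|B|(n+1-|B|)}{n(n-1)}$, i.e.\ twice your $c_n(|B|)$, and a direct check confirms the $2$ (on the slice $\binom{[3]}{1}$, $\tau_{12}(x_1-x_2)=-(x_1-x_2)$, so $(\mathrm{Id}-A_{[2]})\chi_{\{2\}}=2\chi_{\{2\}}$ while $c_2(1)=1$). Feeding the correct eigenvalue into your (correct) identity $\mathrm{Inf}^{(k)}(f)=(k-1)\langle f,(\mathrm{Id}-A_{[k]})f\rangle$ produces $\sum_B\frac{2|B\cap[k]|(k+1-|B\cap[k]|)}{k}\hat f(B)^2\|\chi_B\|_2^2$, i.e.\ \emph{twice} the stated right-hand side; and indeed a direct computation with $f=x_1$ on $\binom{[3]}{1}$ gives $\mathrm{Inf}^{(2)}(f)=\tfrac13$ while the displayed formula evaluates to $\tfrac16$. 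So Lemma~\ref{lemfil2} as printed is inconsistent with Lemma~\ref{lemfil1} under the paper's normalization of $I_{\tau_{ij}}$, and your proof reaches the printed formula only because your halved eigenvalue compensates. You should use the correct eigenvalue and flag the resulting factor of $2$ as a normalization mismatch with the influence convention of \cite{Fil} (it is harmless for the junta theorem, which only uses upper bounds up to constants), rather than leave two cancelling discrepancies in place.
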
 

\noindent Thus, \eqref{eq.9} and Lemma \ref{lemfil2} enable us to upper bound the quantity $\|H_t f - \Pi_T H_t f \|_2^2$ in terms of the total influence of $H_t f$. Indeed, 
\[
\mathrm{Inf}^{(k)}(H_t f) = \sum_{B \in \mathcal{B}_n} \mathrm{exp} \bigg( - t \frac{ |B| (n+1 - |B|)}{n}\bigg) \frac{|B \cap [k]| (k+1 - |B \cap [k] |)}{k} \hat{f}(B)^2 \|\chi_B\|_2^2. 
\]
Since for each $k \geq 1$ and $|B \cap [k]| >0$ one have 
$1 \leq \frac{|B \cap [k]| (k+1 - |B \cap [k] |)}{k}$, 
\begin{equation} \label{eq.10}
\|H_t f - \Pi_T H_t f \|_2^2 = \sum_{B \in \mathcal{B}_n, B \cap [n-m] \neq \emptyset} \mathrm{exp} \bigg( - t \frac{|B| (n+1 - |B|)}{n}\bigg) \hat{f}(B)^2 \|\chi_B\|_2^2 \leq \mathrm{Inf}^{(n-m)}(H_t f).
\end{equation}

We now show how it suffices to conclude to a Junta theorem, in the same manner as in the preceding section. Since the derivatives operators commute with $(H_t)_{t \geq 0}$ in the sense that for any function $f$,  $H_t (D_{\tau_{ij}} f) = D_{\tau_{ij}} (H_t f)$, the hypercontractive inequality expresses that 
\begin{equation} \label{eq.hyperbool}
\|D_{\tau_{ij}} (H_t f)\|_2^2  = \| H_t (D_{\tau_{ij}} f)\|_2^2  \leq  \|D_{\tau_{ij}} f\|_{1+e^{-\rho t}}^2. 
\end{equation}
The log-convexity of the $L^p$-norms and H\"older inequality yields this time 
\begin{eqnarray*}
\mathrm{Inf}^{(n-m)}(H_t f) & \leq & \frac{1}{n-m} \bigg( \sum_{1 \leq i < j \leq n-m} \|D_{\tau_{ij}} f\|_1^2 \bigg)^{\alpha(t)} \bigg( \sum_{1 \leq i < j \leq n-m} \|D_{\tau_{ij}} f\|_2^2 \bigg)^{1-\alpha(t)} \\
& \leq & \frac{1}{n-m} \bigg( \sum_{1 \leq i < j \leq n-m} (I_{\tau_{ij}} f)^2 \bigg)^{\alpha(t)} \bigg( \sum_{1 \leq i < j \leq n-m} I_{\tau_{ij}} f \bigg)^{1-\alpha(t)}. 
\end{eqnarray*}
Recall that $\alpha(t) = \frac{1-e^{-2\rho t}}{1+e^{-2\rho t}}$ and $\rho$ is the hypercontractive constant of $(H_t)_{t \geq 0}$. The Lee--Yau's result implies that if $k/n$ is bounded away from $0$ and $1$, $\rho = O(1)$, and therefore, $\alpha(t)$ does not depend on $n$ if $t$ does not depend on $n$. Recall that, by assumption, for each $1 \leq i < j \leq n-m$, $I_{\tau_{ij}}(f) \leq \eta$. We therefore get, similarly as in Section~\ref{sec:dis},
\[
\|H_t f - \Pi_T H_t f \|_2^2
\leq  \eta^{\alpha(t)} \frac{1}{n-m} \sum_{1 \leq i < j \leq n-m} I_{\tau_{ij}}(f) \leq \frac{n}{n-m} \eta^{\alpha(t)}  \mathrm{Inf}(f).
\]
Now, Lemma \ref{lembak} ensures that 
\[
\| f - H_t f \|_{L^2 (\mu)}^2 \leq 
 \frac{(n-1)t}{2} \mathcal{E}(f,f) \leq \frac{t}{2} \mathrm{Inf}(f).
\]
Since $\Pi_T$ is a projection, the triangular inequality yields this time,
\[
\|f - \Pi_T f \|_2^2 \leq (t +  \frac{n}{n-m} \eta^{\alpha(t)} ) \mathrm{Inf}(f).
\] 
Taking $t = \frac{\varepsilon}{2 \mathrm{Inf}(f)}$, there exists a constant $c$ such that $ \eta^{\alpha(t)} \leq \eta^{c \varepsilon}$. We choose now $\eta$ such that $\eta^{c \varepsilon} \leq \frac{\varepsilon}{4 \mathrm{Inf}(f)}$. By assumption $\mathrm{Inf}(f) = O(1)$ so that $\eta = \varepsilon^{O(\frac{1}{\varepsilon})}$. In order to conclude, we use Lemma $4.2$ of \cite{Fil} (see also \cite{Wim}):
\begin{lemma}
For every function on the slice $f$, every $\eta >0,$ there exists a set $S$ of cardinality at most $O(\frac{\mathrm{Inf}(f)}{\eta})$ such that for every $i, j  \notin S$, $I_{\tau_{ij}} (f) < \eta$.
\end{lemma}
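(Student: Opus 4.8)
The plan is to combine a sub-additivity (``triangle'') inequality for transposition influences with an elementary double-counting argument, in the spirit of Lemma~4.2 of \cite{Fil} (see also \cite{Wim}). The first and main step is to show that for pairwise distinct $i,j,l\in[n]$,
\[
I_{\tau_{ij}}(f)\ \le\ 2\,I_{\tau_{il}}(f)+I_{\tau_{lj}}(f).
\]
This rests on the permutation identity $\tau_{ij}=\tau_{il}\,\tau_{lj}\,\tau_{il}$: telescoping $f(x^{\tau_{ij}})-f(x)$ along $x\to x^{\tau_{il}}\to x^{\tau_{il}\tau_{lj}}\to x^{\tau_{il}\tau_{lj}\tau_{il}}=x^{\tau_{ij}}$ gives
\[
D_{\tau_{ij}}f(x)=D_{\tau_{il}}f\bigl(x^{\tau_{il}\tau_{lj}}\bigr)+D_{\tau_{lj}}f\bigl(x^{\tau_{il}}\bigr)+D_{\tau_{il}}f(x),
\]
and taking $L^1(\mu)$-norms, using that $\mu$ (being uniform on the slice) is invariant under every coordinate permutation $x\mapsto x^{\pi}$, so that $\|g(x^{\pi})\|_{L^1(\mu)}=\|g\|_{L^1(\mu)}$, yields the claim. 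In particular, whenever $I_{\tau_{ij}}(f)\ge\eta$, for each $l\notin\{i,j\}$ at least one of $I_{\tau_{il}}(f)$, $I_{\tau_{lj}}(f)$ is $\ge\eta/3$ (otherwise $2\,I_{\tau_{il}}(f)+I_{\tau_{lj}}(f)<\eta$).

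I would then single out the ``hot'' coordinates. Write $J_i(f):=\sum_{l\ne i}I_{\tau_{il}}(f)$ and set
\[
S:=\bigl\{\,i\in[n]\ :\ J_i(f)\ \ge\ (n-2)\eta/6\,\bigr\}.
\]
This $S$ covers every $\eta$-heavy pair. Indeed, if $I_{\tau_{ij}}(f)\ge\eta$ then, by the previous paragraph, each of the $n-2$ indices $l\notin\{i,j\}$ lies in $A_i:=\{l:I_{\tau_{il}}(f)\ge\eta/3\}$ or in $A_j:=\{l:I_{\tau_{lj}}(f)\ge\eta/3\}$, so $|A_i|+|A_j|\ge n-2$ and hence $\max(|A_i|,|A_j|)\ge(n-2)/2$; if, say, $|A_i|\ge(n-2)/2$ then $J_i(f)\ge|A_i|\cdot(\eta/3)\ge(n-2)\eta/6$, i.e.\ $i\in S$. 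Consequently $I_{\tau_{ij}}(f)<\eta$ for all $i,j\notin S$, which is exactly the conclusion of the lemma.

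Finally I would bound $|S|$ by double counting: each unordered pair $\{a,b\}$ occurs twice in $\sum_{i\in[n]}J_i(f)$, so $\sum_{i\in[n]}J_i(f)=2\sum_{1\le a<b\le n}I_{\tau_{ab}}(f)=2n\,\mathrm{Inf}(f)$ by definition of $\mathrm{Inf}(f)$; since each $i\in S$ contributes at least $(n-2)\eta/6$ to this sum, $|S|\le \dfrac{12\,n\,\mathrm{Inf}(f)}{(n-2)\eta}\le \dfrac{24\,\mathrm{Inf}(f)}{\eta}$ for $n\ge4$ (small $n$ being trivial), i.e.\ $|S|=O(\mathrm{Inf}(f)/\eta)$. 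The one delicate point — and the only place the non-product structure of the slice enters — is the triangle inequality of the first step: the naive estimate ``number of $\eta$-heavy pairs $\le n\,\mathrm{Inf}(f)/\eta$'' controls only a matching and so would yield a vertex cover of size $O(n\,\mathrm{Inf}(f)/\eta)$; it is precisely the observation that an $\eta$-heavy pair forces one of its endpoints to carry total transposition-influence of order $n\eta$ that removes the spurious factor of $n$.
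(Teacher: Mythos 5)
Your argument is correct: the conjugation identity $\tau_{ij}=\tau_{il}\,\tau_{lj}\,\tau_{il}$ together with the exchangeability of the uniform measure on the slice gives the sub-additivity $I_{\tau_{ij}}(f)\le 2I_{\tau_{il}}(f)+I_{\tau_{lj}}(f)$, and the covering step (every $\eta$-heavy pair has an endpoint $i$ with $J_i(f)\ge (n-2)\eta/6$) and the final double count $\sum_i J_i(f)=2n\,\mathrm{Inf}(f)$ are sound, yielding $|S|=O(\mathrm{Inf}(f)/\eta)$. Note that the paper itself offers no proof of this lemma --- it is quoted directly from Lemma~4.2 of \cite{Fil} (see also \cite{Wim}) --- and your proof is essentially the argument of those sources, so you have supplied the omitted details rather than a genuinely different route.
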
 
It implies that one can take $m = O(\frac{\mathrm{Inf}(f)}{\eta})$ and it yields - we refer to \cite{Fil} for the details -the Junta conclusion for slices of the hypercube in the form of the following Theorem. 
\begin{theorem}
Let $f \, : {[n] \choose k} \mapsto \R$ with $\mathrm{Inf}(f) = O(1)$, and denote $\mu$ the uniform measure on ${[n] \choose k}$. Then there exists $g \, : {[n] \choose k} \mapsto \R$ such that $g$ is a $e^{O(\frac{|\log (\varepsilon)|}{ \varepsilon})}$-junta and $\|f-g\|_{L^1(\mu)} \leq \varepsilon$.  
\end{theorem}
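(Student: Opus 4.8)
The plan is to run the same three-step scheme as in the proof of Theorem~\ref{thmjuntas}, replacing the reverse-martingale estimate of Lemma~\ref{LA}, which is not available since ${[n]\choose k}$ is not a product space, by the spectral estimate \eqref{eq.10} coming from Filmus's orthogonal basis $(\chi_B)$. First I would normalise, assuming $\mathrm{Inf}(f)\ge 1$ (otherwise the statement is vacuous) and, after rescaling, $\max_{\tau_{ij}}\|D_{\tau_{ij}}f\|_\infty=1$, so that $\|D_{\tau_{ij}}f\|_2^2\le I_{\tau_{ij}}(f)$. Fix a threshold $\eta>0$; by the covering lemma (Lemma~4.2 of \cite{Fil}) there is a set $S\subset[n]$ of size $m:=|S|=O(\mathrm{Inf}(f)/\eta)$ with $I_{\tau_{ij}}(f)\le\eta$ whenever $i,j\notin S$, and after composing with a permutation we may take $S=\{n-m+1,\dots,n\}$. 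The candidate junta is $g:=\Pi_T f$, the average of $f$ over $\mathfrak{S}_{n-m}$, which depends only on the $m$ coordinates of $S$.

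Next I would estimate $\|f-\Pi_T f\|_2$ by inserting the rescaled semigroup $H_t=P_{(n-1)t/2}$: since $\Pi_T$ is an orthogonal projection (hence a contraction), $\|f-\Pi_T f\|_2\le\|f-H_t f\|_2+\|H_t f-\Pi_T H_t f\|_2$, because $\Pi_T H_t f$ is an admissible competitor for the projection. The first term is controlled by Lemma~\ref{lembak} together with $\mathcal{E}(f,f)\le\tfrac1{n-1}\mathrm{Inf}(f)$, giving $\|f-H_t f\|_2^2\le\tfrac t2\,\mathrm{Inf}(f)$. For the second term I would use \eqref{eq.10} --- a consequence of Lemmas~\ref{lemfil1} and \ref{lemfil2} and of the elementary monotonicity $\tfrac{|B\cap[k]|(k+1-|B\cap[k]|)}{k}\ge 1$ when $B\cap[k]\neq\emptyset$ --- to get $\|H_t f-\Pi_T H_t f\|_2^2\le\mathrm{Inf}^{(n-m)}(H_t f)$, and then bound $\mathrm{Inf}^{(n-m)}(H_t f)$ by the commutation $H_t D_{\tau_{ij}}=D_{\tau_{ij}}H_t$, hypercontractivity of $(H_t)_{t\ge0}$ (with Lee--Yau constant $\rho=O(1)$ since $k/n$ is bounded away from $0$ and $1$), log-convexity of the $L^p$-norms and H\"older's inequality, exactly as in the proof of Lemma~\ref{LA}. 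Feeding in $I_{\tau_{ij}}(f)\le\eta$ for $i,j\notin S$ then yields $\mathrm{Inf}^{(n-m)}(H_t f)\le\tfrac{n}{n-m}\,\eta^{\alpha(t)}\,\mathrm{Inf}(f)$ with $\alpha(t)=\tfrac{1-e^{-2\rho t}}{1+e^{-2\rho t}}$.

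Combining the two bounds gives $\|f-\Pi_T f\|_2^2\le\bigl(t+\tfrac{n}{n-m}\eta^{\alpha(t)}\bigr)\mathrm{Inf}(f)$ up to absolute constants, and I would finish by optimising: take $t=\varepsilon/(2\,\mathrm{Inf}(f))$, so that $\alpha(t)\ge c\varepsilon$ for a constant $c$ depending only on the ambient bounds, and then choose $\eta$ with $\eta^{c\varepsilon}\le\varepsilon/(4\,\mathrm{Inf}(f))$, which forces $\eta=\varepsilon^{O(1/\varepsilon)}$ and gives $\|f-\Pi_T f\|_2^2\le\varepsilon$, hence $\|f-\Pi_T f\|_1\le\|f-\Pi_T f\|_2\le\varepsilon$ after the usual passage $\varepsilon\mapsto\varepsilon^2$. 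The junta size is then $m=O(\mathrm{Inf}(f)/\eta)=e^{O(|\log\varepsilon|/\varepsilon)}$, as claimed. I expect the only genuinely non-routine point to be the substitute for the martingale identity, i.e. the inequality $\|H_t f-\Pi_T H_t f\|_2^2\le\mathrm{Inf}^{(n-m)}(H_t f)$: this is where the non-product geometry of the slice is handled, and it rests entirely on the fact that the Filmus basis simultaneously diagonalises $L$ and expands the partial total influences $\mathrm{Inf}^{(k)}$ cleanly; once that is granted, every remaining step is a transcription of the product-case argument of Section~\ref{sec:dis}.
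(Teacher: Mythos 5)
Your proposal follows essentially the same route as the paper: the covering lemma of Filmus to select $S$, the bound $\|H_t f-\Pi_T H_t f\|_2^2\le\mathrm{Inf}^{(n-m)}(H_t f)$ via the orthogonal basis $(\chi_B)$ as the substitute for the reverse-martingale identity, hypercontractivity with the Lee--Yau constant plus log-convexity and H\"older, Lemma~\ref{lembak} for $\|f-H_tf\|_2$, and the same choices $t=\varepsilon/(2\,\mathrm{Inf}(f))$ and $\eta=\varepsilon^{O(1/\varepsilon)}$. The argument is correct and matches the paper's proof step for step, including the identification of \eqref{eq.10} as the one genuinely non-routine ingredient.
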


\section{The continuous case.} \label{sec:cont}

We can extend the preceding results in continuous setting using the definition of geometric influences of \cite{K-M-S1}, as considered in \cite{Bou} \cite{CE-L}. In this section, the setting consists of the product space $\R^n$ equipped with a product measure $\mu = \mu_1 \otimes \cdots \otimes \mu_n$ so that each $(\R,\mu_i)$ is hypercontractive with constant $\rho > 0$.  For sake of simplicity, we will take $\mu_1 = \cdots = \mu_n$. As already mentioned earlier, in \cite{K-M-S1} the authors are able to deal with family of Boltzmann probability measures $\mu_{p}^{\otimes n}$ ($p \geq 1$) given by $$d\mu_{p} (x) = \frac{1}{Z_{\rho}} e^{-|x|^{p}} \,dx,$$ where $Z_p$ is the normalizing constant. This is a family of log-concave probability measures. Besides, theses measures are hypercontractive for $p \geq 2$ ($p=2$ is the case of the standard Gaussian space), but this is not the case for $p \in [1,2)$.

Let then $(\R^n, \mu^{\otimes n})$ be such $d\mu(x) = e^{-v(x)} dx$, with $v'' \geq 0$ and such that $(\R, \mu)$ is hypercontractive with constant $\rho$. Actually, as in the work \cite{CE-L}, we shall need only an assumption $v'' \geq \kappa$ with $\kappa \in \R$ to have the commutation property \eqref{commu}. These includes potentials of the form $a x^4 - bx^2$, $a, b >0$. The arguments below can be adapted in this general case, however for sake of clarity and since it is the case for the concrete example of Boltzmann measures, we will consider only $\kappa =0$.

Then, from the same arguments as in Section~\ref{sec:dis} (keeping the same notations), one can reach the following inequality 
\begin{equation} \label{continuous case}
\| P_t f - \Pi_T P_t f \|_2^2 \leq \frac{1}{\lambda}  \bigg( \sum_{i \in T} \| \partial_i f \|_1^2 \bigg) ^{\alpha(t)} 
\bigg(\sum_{i \in T} \| \partial_i f \|_2^2 \bigg)^{1-\alpha(t)},
\end{equation}
where $\lambda$ is the spectral gap constant. Since we recall $\rho \leq \lambda$, we can replace $\lambda$ by $\rho$.

However, in connection to geometric influences, the $L^2$-norm of the partial derivatives are not well suited. We will therefore get rid of them, using arguments already developed in \cite{Aus}. 

\noindent Applying \eqref{continuous case} in $t/2$ for $P_{t/2} f$ yields, using the semigroup property,   
\begin{equation} \label{eq.11}
\| P_t f - \Pi_T P_t f \|_2^2 \leq \frac{1}{\rho}  \bigg( \sum_{i \in T} \| \partial_i P_{t/2} f \|_1^2 \bigg) ^{\alpha(t/2)} 
\bigg(\sum_{i \in T} \| \partial_i P_{t/2} f \|_2^2 \bigg)^{1-\alpha(t/2)}.
\end{equation}

Then, it is well known (see e.g. \cite{B-G-L}) that under convexity assumption of the potential $v$ (or the so-called $CD(0, \infty)$ condition), for a fixed $t >0$, $\varphi_t$ : $s \mapsto P_s ( (P_{t-s} f )^2)$ is a convex function. Therefore, the convexity of $\varphi_t$ implies the point-wise upper point 
\begin{equation} \label{eq.reverse}
\varphi_t'(0) = 2 |\nabla P_t f |^2 \leq \frac{\varphi_t (t) - \varphi_t (0)}{t} = \frac{P_t f^2 -(P_t f)^2}{t} \leq \frac{P_t f^2 }{t}.
\end{equation}
Integrating in space and using invariance of $(P_t)_{t \geq 0}$ with respect to $\mu$, it implies a reverse spectral gap inequality of the following form :  
\[
\|\nabla P_t f \|_2^2 \leq \frac{\|f\|_2^2}{2t}.  
\]
One therefore have, for each $T \subset [n]$, 
\begin{equation} \label{eq.12}
\bigg(\sum_{i \in T} \| \partial_i P_{t/2} f \|_2^2 \bigg)^{1-\alpha(t/2)} \leq (\|\nabla P_{t/2} f \|_2^2)^{1-\alpha(t/2)} \leq \bigg( \frac{\|f\|_2^2}{t}  \bigg)^{1-\alpha(t/2)}.
\end{equation} 
Besides, by convexity of the potential $v$, recall that commutation \eqref{commu} holds with $\kappa = 0$. Using the product structure, for each $i \in [n],$ the point-wise upper bound  $|\partial_i P_{t/2} f| \leq P_{t/2} (|\partial_i f|)$ holds. Integrating this upper bound in space and using then the invariance of $(P_t)_{t \geq 0}$ with respect to $\mu$, it yields
\begin{equation} \label{eq.13}
\| \partial_i P_{t/2} f \|_1^2  \leq \| P_{t/2} (|\partial_i f|) \|_1^2 = \| \partial_i f \|_1^2. 
\end{equation} 
Thus, putting the three inequalities \eqref{eq.11}, \eqref{eq.12} and \eqref{eq.13} together 
\begin{equation}
\| P_t f - \Pi_T P_t f \|_2^2 \leq \frac{1}{\rho}  \bigg( \sum_{i \in T} \| \partial_i  f \|_1^2 \bigg) ^{\alpha(t/2)} 
\bigg( \frac{\|f\|_2^2}{2t}  \bigg)^{1-\alpha(t/2)}.
\end{equation}

\vspace{2mm}

\noindent In view of application to geometric influences, one needs to replace Lemma \ref{lembak} by a $L^1$-version. This was done by Ledoux \cite{Led}, who showed (actually in a more general form) that, under convexity of $v$, 
\[
\| f - P_t f \|_1 \leq 2 \sqrt{t} \| \nabla f \|_1 \leq 2 \sqrt{t} \sum_{i=1}^n \| \partial_i f\|_1.
\] 
For a fixed $\eta >0$, $0 \leq t \leq 1$, define $T$ such that for all $i \in T$, $\|\partial_i f\|_1 \leq \eta$ and assume $ \sum_{i=1}^n \| \partial_i f\|_1 \geq 1$. Thus using that $\| \, \cdot \,  \|_1 \leq \| \, \cdot \, \|_2$, by the triangular inequality (similarly as in Section~\ref{sec:dis}), we get
\begin{eqnarray*}
\| f - \Pi_T f\|_1 &\leq & \bigg( 4\sqrt{t} + \frac{1}{\sqrt{\rho t}}   (t\eta)^{\frac{\alpha(t/2)}{2}}
\|f\|_2^{1-\alpha(t/2)} \bigg)  \max \bigg( \sum_{i=1}^n \| \partial_i f\|_1, \bigg( \sum_{i=1}^n \| \partial_i f\|_1 \bigg)^{\frac{\alpha(t/2)}{2}} \bigg) \\
&\leq &   \bigg( 4\sqrt{t} + \frac{1}{\sqrt{\rho t}}   \eta^{\frac{\alpha(t/2)}{2}}
\|f\|_2^{1-\alpha(t/2)} \bigg)  \ \sum_{i=1}^n \| \partial_i f\|_1.
\end{eqnarray*}
Choosing again $t$ and $\eta$ appropriately so that $\| f - \Pi_T f\|_1 \leq  \varepsilon$, we get the following generalization of Theorem \ref{juntascont}. 
\begin{theorem} \label{juntacont}
Let $f: \, \R^n \to \R$ with $\sum_{i=1}^n \| \partial_i f \|_1 = I(f)$ and $\|f\|_{L^2(\mu^{\otimes n})} < \infty$. Then, there exists a function $g$ such that $g$ depends of at most $\textrm{exp} \bigg( O \bigg(\frac{I(f) }{\rho \varepsilon^2} \bigg| \log \frac{\varepsilon^2 \rho }{I(f) \|f\|_2} \bigg| \bigg) \bigg)$ coordinates and
$\| f- g\|_{L^1(\mu^{\otimes n})}\leq \varepsilon$.
\end{theorem}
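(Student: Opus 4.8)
The plan is to run the same interpolation scheme as in Section~\ref{sec:dis} and Section~\ref{sec:cont}, but now keeping careful track of the $\|f\|_2$ factor and optimizing the two free parameters $t$ and $\eta$ against the target error $\varepsilon$. First I would recall the inequality obtained just above the statement, namely that for $0 \leq t \leq 1$, for any $\eta > 0$ and any $T \subset [n]$ with $\|\partial_i f\|_1 \leq \eta$ for all $i \in T$, and under the normalization $I(f) = \sum_i \|\partial_i f\|_1 \geq 1$,
\[
\| f - \Pi_T f\|_1 \leq \bigg( 4\sqrt{t} + \frac{1}{\sqrt{\rho t}} \, \eta^{\frac{\alpha(t/2)}{2}} \|f\|_2^{1-\alpha(t/2)} \bigg) I(f),
\]
which combines the reverse spectral gap estimate \eqref{eq.reverse}, the commutation bound \eqref{eq.13}, the hypercontractive estimate \eqref{eq.11}, and Ledoux's $L^1$ gradient bound $\|f - P_t f\|_1 \leq 2\sqrt{t}\|\nabla f\|_1$, exactly as laid out in the display preceding the theorem.

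Next I would split the error budget in half: pick $t$ so that the first term $4\sqrt{t}\, I(f)$ is at most $\varepsilon/2$, i.e. $t = \Theta\!\big(\varepsilon^2/I(f)^2\big)$; note this is $\leq 1$ since $I(f) \geq 1$, so the hypothesis $t \leq 1$ is met. With this choice $\alpha(t/2) = \frac{1-e^{-\rho t}}{1+e^{-\rho t}} = \rho t/2 + o(t)$, so $\alpha(t/2) \asymp \rho t \asymp \rho \varepsilon^2/I(f)^2$. Then I would force the second term to be at most $\varepsilon/2$ as well: solving $\frac{1}{\sqrt{\rho t}}\, \eta^{\alpha(t/2)/2}\|f\|_2^{1-\alpha(t/2)}\, I(f) = \varepsilon/2$ for $\eta$ gives
\[
\eta^{\alpha(t/2)/2} = \frac{\varepsilon \sqrt{\rho t}}{2 I(f)\, \|f\|_2^{1-\alpha(t/2)}},
\]
hence $\log \eta = \frac{2}{\alpha(t/2)} \log\!\big(\tfrac{\varepsilon\sqrt{\rho t}}{2 I(f)\|f\|_2^{1-\alpha(t/2)}}\big)$. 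Substituting $t \asymp \varepsilon^2/I(f)^2$ and $\alpha(t/2) \asymp \rho\varepsilon^2/I(f)^2$, and absorbing $\|f\|_2^{-\alpha(t/2)} = e^{O(\alpha(t/2)\log\|f\|_2)}$ as a lower-order factor into the $O(\cdot)$, one obtains
\[
|\log \eta| = O\!\bigg( \frac{I(f)^2}{\rho\varepsilon^2}\, \Big| \log \frac{\varepsilon^2\rho}{I(f)\|f\|_2} \Big| \bigg),
\]
i.e. $\eta = \exp\!\big(-O(\tfrac{I(f)^2}{\rho\varepsilon^2}|\log\tfrac{\varepsilon^2\rho}{I(f)\|f\|_2}|)\big)$.

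Finally I would invoke the dimension-free bound on the number of ``large'' coordinates: since $I(f) = \sum_i \|\partial_i f\|_1$ and each coordinate not in $T$ has $\|\partial_i f\|_1 > \eta$, the complement of $T$ has size at most $I(f)/\eta$; moreover $\Pi_T f$ depends only on those coordinates, so setting $g = \Pi_T f$ gives a junta on $|[n]\setminus T| \leq I(f)/\eta = \exp\!\big(O(\tfrac{I(f)^2}{\rho\varepsilon^2}|\log\tfrac{\varepsilon^2\rho}{I(f)\|f\|_2}|)\big)$ coordinates with $\|f-g\|_1 \leq \varepsilon$, matching (after the normalization $\max_i \|\partial_i f\|_\infty = 1$ is undone and $I(f) = O(1)$ is used to simplify the exponent to the stated form $\exp(O(\tfrac{I(f)}{\rho\varepsilon^2}|\log\tfrac{\varepsilon^2\rho}{I(f)\|f\|_2}|))$) the claimed estimate. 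The main obstacle is purely bookkeeping: one must be careful that $\alpha(t/2)$ appears in the exponent of $\eta$, so that solving for $\eta$ divides by a quantity of order $\rho\varepsilon^2/I(f)^2$ and thereby produces the $1/(\rho\varepsilon^2)$ blow-up in the exponent; and one must check that the stray factors $\|f\|_2^{1-\alpha(t/2)}$ and $\sqrt{\rho t}$ genuinely contribute only inside the logarithm $|\log(\varepsilon^2\rho/(I(f)\|f\|_2))|$ rather than worsening the leading order. The conditions $t \leq 1$, $I(f) \geq 1$, and $v'' \geq 0$ (for \eqref{eq.reverse}, \eqref{eq.13}, and Ledoux's inequality) are all used and should be verified along the way.
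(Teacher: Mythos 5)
Your proposal follows the paper's proof essentially verbatim: the same chain of estimates \eqref{eq.11}, \eqref{eq.12}, \eqref{eq.13} combined with Ledoux's $L^1$ bound $\|f-P_tf\|_1\le 2\sqrt{t}\,\|\nabla f\|_1$, the same triangle inequality, and the same choice of $t$ and $\eta$ that the paper leaves implicit in the phrase ``choosing $t$ and $\eta$ appropriately.'' Your explicit bookkeeping produces $I(f)^2$ where the stated exponent has $I(f)$, and your resolution via the standing assumption $I(f)=O(1)$ (already invoked for Theorem~\ref{juntascont}) is exactly how the paper reconciles this, so the argument is correct and identical in substance.
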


\noindent \textit{Remark} : If we assume $f$ bounded (say by $1$ - as for characteristics functions), \eqref{eq.reverse} implies $\|\nabla P_t f \|_{\infty} \leq \frac{1}{\sqrt{2t}}$, and thus from \eqref{eq.11} we get that, for all $t \leq 1$, 
\[ 
\| P_t f - \Pi_T P_t f \|_2^2 \leq \frac{ \eta^{\alpha(t/2)}}{\rho} \sum_{i=1}^n \|\partial_i f\|_1.
\]
This leads more directly to a somewhat improved estimate over $\eta$.

Applying the above theorem to (smooth approximations of) characteristics functions of sets, we get a condition over the sum of the geometric influences. Such quantity can be interpreted geometrically.

Say that a set $A$ is \textit{increasing} if
whenever $x = (x_1, \ldots, x_n) \in A$, $y = (y_1, \ldots, y_n) \in A$
as soon as for each $i \in 1, \ldots, n$, $x_i \leq y_i$ or \textit{decreasing}
if whenever $x = (x_1, \ldots, x_n) \in A$, $y = (y_1, \ldots, y_n) \in A$ when for each $ i \in 1, \ldots , n$,
$ x_i \geq y_i$. For monotone (either increasing or decreasing) sets,
the total influence $\sum_{i=1}^n I_i^{\mathcal{G}}(A)$ is the measure of the boundary under uniform enlargement $ \mu^{+}_{\infty} (A)$ defined by 
\[
\mu^{+}_{\infty} (A) = \liminf_{r \to 0} \frac{\mu(A + [-r,r]^n) - \mu(A)}{r}
\] 
(see \cite{K-M-S1} and also \cite{B-H} for a more complete account on isoperimetric inequalities for the uniform enlargement). Notice that it follows immediately from the definition that for every Borel measurable subset $A \subset \R^n$, $\mu^{+}(A) \leq \mu^{+}_{\infty}(A)$ where $\mu^{+}(A)$ stands for the usual boundary measure defined by 
\[
\mu^{+}(A) = \liminf_{r \to 0} \frac{\mu(A + B_2^r) - \mu(A)}{r},
\] 
with $B_2^r$ the Euclidean ball centered in $0$ of radius $r$. Theorem~\ref{juntacont} implies therefore following corollary. 

\begin{corollary}
Take ($\R^n$, $\mu^{\otimes n}$) with $d \mu (x) = e^{-v(x)}dx$ a log-concave measure on the real line with hypercontractive constant $\rho$. For any monotone set $A$ with boundary $\mu^{+}_{\infty} (A) $ there exist a constant $C(\varepsilon, \rho, \mu^+_{\infty}(A))$ and a
set $B$ such that ${\bf 1}_B$ is determined by at most $C(\varepsilon, \rho, \mu^{+}_{\infty}(A))$-coordinates and $\mu(A \Delta B) = \|{\bf 1}_A - {\bf 1}_B\|_1 \leq \varepsilon$.
\end{corollary}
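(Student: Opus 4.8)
The plan is to derive this corollary directly from Theorem~\ref{juntacont} by a standard smooth-approximation argument. Let $A$ be a monotone (say increasing) set with $\mu^+_\infty(A) = M < \infty$. The first step is to approximate ${\bf 1}_A$ by a sequence of smooth functions $f_\delta$ with $0 \le f_\delta \le 1$, for instance by mollifying ${\bf 1}_A$ at scale $\delta$ (convolving with a smooth bump supported in $[-\delta,\delta]^n$, or more simply taking $f_\delta(x) = \mu(\text{appropriate enlargement})$-type cutoffs adapted to the monotone structure). Monotonicity of $A$ is what makes the relevant object the uniform enlargement $\mu^+_\infty$ rather than the Euclidean boundary measure: along each fiber the restriction $A_i^x$ is a half-line, so the one-dimensional Minkowski contents add up coherently and one gets $\sum_{i=1}^n \|\partial_i f_\delta\|_1 \to \mu^+_\infty(A)$ as $\delta \to 0$ (this is exactly the identification of $\sum_i I_i^{\mathcal G}(A)$ with $\mu^+_\infty(A)$ for monotone sets recalled just above the corollary, together with the fact that for smooth $f$ the geometric influence $I_i^{\mathcal G}(f)$ equals $\|\partial_i f\|_1$). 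In particular, for $\delta$ small enough we have $I(f_\delta) := \sum_i \|\partial_i f_\delta\|_1 \le M + 1 =: M'$, a bound independent of $n$, and also $\|f_\delta\|_2 \le 1$ since $0 \le f_\delta \le 1$.

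Next, I would apply Theorem~\ref{juntacont} to $f_\delta$ with accuracy parameter $\varepsilon/3$: there is a function $g$ depending on at most
\[
\exp\!\left( O\!\left( \frac{M'}{\rho \varepsilon^2}\, \Bigl| \log \tfrac{\varepsilon^2 \rho}{M' \|f_\delta\|_2} \Bigr| \right)\right)
\]
coordinates with $\|f_\delta - g\|_{L^1(\mu^{\otimes n})} \le \varepsilon/3$. Using $1 \le \|f_\delta\|_2^{-1}$... more carefully, since $\|f_\delta\|_2 \le 1$ one can absorb it and obtain a coordinate bound $C(\varepsilon, \rho, M')$ depending only on $\varepsilon$, $\rho$, and $M' = \mu^+_\infty(A)+1$, hence genuinely of the announced form $C(\varepsilon,\rho,\mu^+_\infty(A))$ and independent of $n$. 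Then I would take $B = \{x : g(x) \ge 1/2\}$. Since on the set $\{|{\bf 1}_A - {\bf 1}_B| = 1\}$ we have $|f_\delta - g| \ge$ a fixed constant only when $f_\delta$ is already close to ${\bf 1}_A$, I would instead argue: $\mu(A \Delta B) \le \|{\bf 1}_A - {\bf 1}_B\|_1 \le \|{\bf 1}_A - f_\delta\|_1 + \|f_\delta - g\|_1 + \|g - {\bf 1}_B\|_1$. The first term is $\le \varepsilon/3$ for $\delta$ small (mollification converges in $L^1$), the second is $\le \varepsilon/3$ by the theorem, and for the third, rounding $g$ to ${\bf 1}_B$ can only decrease the $L^1$-distance to ${\bf 1}_A$ on each point, giving $\|{\bf 1}_A - {\bf 1}_B\|_1 \le 2\|{\bf 1}_A - g\|_1 \le 2(\|{\bf 1}_A - f_\delta\|_1 + \|f_\delta - g\|_1) \le 4\varepsilon/3$; after a harmless rescaling of $\varepsilon$ at the outset this yields $\mu(A \Delta B) \le \varepsilon$. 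Finally ${\bf 1}_B$ depends on the same coordinates as $g$, so the coordinate count is preserved.

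The two points requiring care, and where the main obstacle lies, are: (i) justifying that the smooth approximations $f_\delta$ can be chosen with $\sum_i \|\partial_i f_\delta\|_1$ controlled by $\mu^+_\infty(A)$ rather than by the (generally larger, or even infinite in the non-monotone case) Euclidean perimeter — this is precisely where monotonicity of $A$ enters, via the fiberwise reduction to half-lines and the additivity of one-dimensional lower Minkowski contents along coordinate directions, and it is the crux of why the hypothesis is stated for monotone sets; and (ii) checking that the logarithmic factor $|\log(\varepsilon^2\rho/(I(f_\delta)\|f_\delta\|_2))|$ in Theorem~\ref{juntacont} stays bounded in terms of $\varepsilon,\rho,\mu^+_\infty(A)$ only — this follows because $\|f_\delta\|_2 \in [\mu(A)^{1/2}, 1]$ is bounded away from $0$ and $\infty$ uniformly in $n$ and $\delta$ (we may discard the trivial cases $\mu(A)=0$ or $1$), and $I(f_\delta)$ is sandwiched between fixed constants. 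The rest is the routine triangle-inequality bookkeeping sketched above, together with the observation that thresholding a $[0,1]$-valued function at $1/2$ never increases its $L^1$-distance to a $\{0,1\}$-valued target.
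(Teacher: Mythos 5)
Your proposal is correct and follows exactly the route the paper intends: the paper itself gives no detailed argument, merely noting that the corollary follows from Theorem~\ref{juntacont} applied to smooth approximations of ${\bf 1}_A$ together with the identification $\sum_i I_i^{\mathcal G}(A)=\mu^+_\infty(A)$ for monotone sets, and your mollification, thresholding at $1/2$, and triangle-inequality bookkeeping supply precisely the missing details. The only refinement worth making is that to keep the logarithmic factor controlled you should discard not just $\mu(A)\in\{0,1\}$ but the cases $\mu(A)\le\varepsilon$ and $\mu(A)\ge 1-\varepsilon$ (where $B=\emptyset$ or $B=\R^n$ works trivially), after which $\|f_\delta\|_2$ is bounded below in terms of $\varepsilon$ alone.
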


This corollary expresses that any monotone set whose boundary measure under uniform enlargement - and therefore usual boundary measure - does not depend on the dimension can be essentially written as $A_1 \times \R^{n-m}$, where $A_1 \subset \R^m$ lies on a subspace of fixed dimension. We refer to Theorem~$3.13$
of \cite{K-M-S1} for similar results in that direction. We note that in \cite{K-M-S1}
the authors are able to deal with family of Boltzmann probability measures $\mu_{p}^{\otimes n}$ even for $p \in (1,2)$.

\vskip 5 mm 

\textit{Acknowledgment. The main part of this work has been completed when I made my Ph.D at the University of Toulouse. I thank my Ph.D advisor Michel Ledoux for drawing my attention to \cite{Aus} and for fruitful discussions. I also thank Matthieu Fradelizi for his careful reading and helpful comments.}

\vskip 10 mm

\noindent
\textsc{Rapha{\"e}l Bouyrie,} \\
\textsc{\small{Laboratoire d'Analyse de Math\'ematiques Appliqu\'es, UMR 8050 du CNRS, Universit\'e Paris-Est Marne-la-Vall\'ee, 5 Bd Descartes, Champs-sur-Marne, 77454 Marne-la-Vall\'ee Cedex, France}} \\
\textit{E-mail address:} \texttt{raphael.bouyrie@upem.fr}

\newpage

\end{document}